\documentclass[reqno,11pt]{amsart}

\usepackage{amssymb,graphics,graphicx,wrapfig}

\def\eps{\varepsilon}

\def\be{\begin{equation}}
\def\ee{\end{equation}}
\def\ba{\begin{align}}
\def\bm{\begin{multline}}
\def\bfig{\begin{figure}[htb]}
\def\efig{\end{figure}}

\numberwithin{equation}{section}
\newtheorem{theorem}{Theorem}[section]

\newtheorem{lemma}[theorem]{Lemma}

\newtheorem{assumption}{Assumption}

\DeclareMathSymbol{\leqslant}{\mathalpha}{AMSa}{"36}
\DeclareMathSymbol{\geqslant}{\mathalpha}{AMSa}{"3E}
\DeclareMathSymbol{\doteqdot}{\mathalpha}{AMSa}{"2B}
\DeclareMathSymbol{\circlearrowright}{\mathalpha}{AMSa}{"08}
\DeclareMathSymbol{\subsetneq}{\mathalpha}{AMSb}{"28}
\DeclareMathSymbol{\supsetneq}{\mathalpha}{AMSb}{"29}
\renewcommand{\leq}{\;\leqslant\;}
\renewcommand{\geq}{\;\geqslant\;}

\newcommand{\dd}{{\rm d}}
\newcommand{\e}[1]{\,{\rm e}^{#1}\,}

\newcommand{\upchi}{\raise 2pt \hbox{$\chi$}}

\def\writefig#1 #2 #3 {\rlap{\kern #1 truecm \raise #2 truecm
\hbox{#3}}}

\newcommand{\caM}{{\mathcal M}}

\def\bbone{{\mathchoice {\rm 1\mskip-4mu l} {\rm 1\mskip-4mu l} {\rm 1\mskip-4.5mu l} {\rm 1\mskip-5mu l}}}

\newcommand{\bbE}{{\mathbb E}}

\newcommand{\bbN}{{\mathbb N}}

\newcommand{\bbP}{{\mathbb P}}

\newcommand{\bbR}{{\mathbb R}}

\newcommand{\bbT}{{\mathbb T}}

\newcommand{\bbZ}{{\mathbb Z}}

\usepackage{todonotes} 

\presetkeys{todonotes}{backgroundcolor=black!5, inline}{}

\usepackage[ngerman, english]{babel}
\usepackage[applemac]{inputenc}

\usepackage{tikz}
\usetikzlibrary{plotmarks}
\usepackage{multicol}
\usepackage{comment}
\usepackage{ mathrsfs }

\usepackage{amsthm}
\usepackage{amsfonts,amsmath}
\newtheorem{theo}[theorem]{Theorem} 
\newtheorem{lem}[theorem]{Lemma}
\newtheorem{bem}[theorem]{Remark}
\newtheorem{cor}[theorem]{Corollary}
\newtheorem{prop}[theorem]{Proposition}
\usepackage{calligra}
\usepackage{bibgerm}
\usepackage{url}
\usepackage{textcomp}
\usepackage{float}
\usepackage{caption}
\usepackage{subcaption}

\usepackage{hyperref}
\hypersetup{
    colorlinks=true, 
    linktoc=all,     
    linkcolor=blue,  
}

\newcommand{\T}{\mathbb{T}} 
\newcommand{\PP}{\mathbb{P}}

\newcommand{\id}{1\hspace{-0,9ex}1} 
\newcommand{\E}{\mbox{I\negthinspace E}}
\newcommand*\diff{\mathop{}\!\mathrm{d}}

\usepackage[multiple]{footmisc} 

\newcommand{\getmydate}{%
  \ifcase\month%
    \or Januar\or Februar\or M\"arz%
    \or April\or Mai\or Juni\or Juli%
    \or August\or September\or Oktober%
    \or November\or Dezember%
  \fi\ \number\year%
}

\begin{document}


\title{Hydrodynamic limits of finite range exclusion processes}

\author{Arne Berrens}
\address{Arne Berrens \hfill\newline
\indent FB Mathematik, TU Darmstadt} \hfill\newline
\email{berrens@mathematik.tu-darmstadt.de}

\author{Volker Betz}
\address{Volker Betz \hfill\newline
\indent FB Mathematik, TU Darmstadt} \hfill\newline
\email{betz@mathematik.tu-darmstadt.de}

\author{Jan Giesselmann}
\address{Jan Giesselmann \hfill\newline
\indent FB Mathematik, TU Darmstadt} \hfill\newline
\email{giesselmann@mathematik.tu-darmstadt.de}

\author{Stefan Walter}

\maketitle

\begin{quote}
{\small
{\bf Abstract.}
We consider variants of the simple symmetric exclusion process (SSEP) where particles are 'large'; 
while in the SSEP, each lattice point is either occupied or not occupied by a single particle, in our model 
each particle is represented by a fixed shape, and the dynamics is so that shapes cannot overlap. In one dimension, 
this reduces to a finite range interaction that prevents particles from coming closer to each other than a certain distance. 
In this case, we prove the existence of the hydrodynamic limit, and identify the relevant macroscopic PDE, which is a porous medium
equation. In two dimensions, we have no rigorous results, but instead rely on numerical simulations to approximate the diffusion matrix.
Under the assumption that the hydrodynamic limit exists in two dimensions, we derive properties of the diffusion matrix based on the shape of the particles.

}

\vspace{1mm}
\noindent
{\footnotesize {\it Keywords:} hydrodynamic limit, porous media equation}

\vspace{1mm}
\noindent
{\footnotesize {\it 2020 Math.\ Subj.\ Class.:} 60K35, 82C22, 65C35}
\end{quote}

\section{Introduction}\label{Intro}

The simple symmetric exclusion process (SSEP) is a fundamental interacting particle system, and 
its large scale behaviour is well understood. The process consists of a collection of particles performing 
continuous time, simple symmetric random walks on $\bbZ^d$, but interacting through a hard core exclusion condition: 
if a particle attempts a jump to a site that is already occupied, that jump is not made. The hydrodynamic limit consists of 
rescaling the lattice $\bbZ^d$ so that neighbouring lattice sites have a distance of $1/N$, at the same time observing the 
process for a time $tN^2$ with $t > 0$, and then letting $N \to \infty$. If the initial configuration of particles 
(interpreted as atomic measures with unit atoms at the position of the particles) weakly 
converges to a measure $u(x,0) \dd x$ with a smooth Lebesgue density $u$, then for any time $s \leq t$, the configuration 
of the process at time $s N^2$ also converges to a similar measure $u(x,s) \dd x$, and its Lebesgue density is the solution 
of the heat equation $\partial_t u(x,t) = \Delta u(x,t)$. Details of and rigorous conditions and proofs 
for this standard hydrodynamic limit can be found in Chapter 4 of \cite{Kipnis99}. 

A very natural extension of the SSEP is to give each particle a non-trivial shape $A \subset \bbZ^d$: a 
particle at position $x \in \bbZ^d$ actually 'occupies' all lattice points $x + A$, meaning that if some other particle 
attempts to jump to a position $y \in \bbZ^d$, the jump is only made if $(x+A) \cap (y+A) = \emptyset$. The SSEP is the special 
case where $A = \{0\}$, and in general it is sensible to assume that $A$ is a bounded set containing $0$. Such models are natural
candidates to describe systems of physical particles that are spatially extended, and/or lack rotational symmetry. Indeed, 
this model appears at several places in the literature. The one-dimensional model goes back at least to the work \cite{MacDonald68}, where the 
version with asymmetric jump rates is used to model biopolymerization along a nucleic-acid template. 
In \cite{Schoenherr04}, this model has been analyzed more extensively, in particular the diffusion of a tagged particle
and the hydrodynamic limit. 

For the two-dimensional model, particular attention has been paid to the stationary distribution of the 
generalized SSEP where $A$ is a square of side length two. The reason is that the stationary distribution of this model exhibits a phase transition: 
when the density (which is a conserved quantity of the dynamics) is below some critical value, there is a unique stationary measure
for the stochastic dynamics. In contrast, for densities higher than this value, there are precisely four different stationary 
distributions: one distinction is that the mobility in one of the coordinate directions is higher than in the 
other one. For each preferred coordinate direction, another distinction is that either most of the mobility is on rows (columns)
with even second (first) coordinate, or in those with odd second (first) coordinate. The heuristic reason for this phenomenon is not 
very hard to understand: at very high densities, to even achieve such a high packing particles should be mostly in neat rows 
or columns, because chaotic placement takes up too much space. Then, one can study the dynamics of holes instead of particles. 
But holes have a rectangular shape (2 times 1), and so in order for a particle to switch between two columns, two holes 
in one of these columns have to meet. This event happens on a slower time scale than the movement of the holes themselves. 
While numerical studies confirming the phase transition go back to at least \cite{fernandes2007}, a mathematical proof 
has only been given very recently in the important paper of Hadas and Peled \cite{HadasPeled2026}.

The purpose of the present paper is an investigation of the dynamics, in particular the hydrodynamic limit, of the
generalized SSEP in one and two dimensions. In one dimension, we give a complete, rigorous proof of the convergence of the 
empirical density to the solution of a certain porous media equation. The equation is the one obtained
in \cite{Schoenherr04} in a non-rigorous fashion, and we thus confirm those results. The proof itself follows the relatively
well established route laid out in e.g.\ \cite{Kipnis99}, although confirming that the various steps go through still requires 
some work. The two-dimensional model is much harder. First of all, it is reasonable to avoid the high density phase where multiple
stationary distributions exist. But even in the subcritical regime, the model is not of gradient type.
The gradient condition requires that the jump rates can be expressed using discrete gradients of a local function (see \cite[Definition 2.5]{Kipnis99} for a precise definition).
This missing structure makes it extremely
challenging both from a theoretical point of view, as well as for guessing the limiting PDE in the hydrodynamic limit. The 
only prediction for the dynamics of such a model that we are aware of can be found in \cite{Bio11} for the case $A = \{0\} \times \{0,1,\ldots,r\}$, 
but in the reduction to one dimension, their predicted diffusivity differs from the one in \cite{Schoenherr04}, and therefore it can be suspected that also the prediction for two dimensions is not correct.

For the purpose of this paper, we assume without proof the existence of the hydrodynamic limit, and that the limiting PDE is 
a porous media equation.
Under these conditions we show that certain symmetries of the exclusion zone $A$ are inherited by the diffusion matrix.
More precisely, we prove that the diffusion matrix is diagonal if $A$ is rectangular and that its diagonal entries coincide if $A$ is square.

Additionally, we conducted careful numerical experiments. 
We observe that at low densities the diffusivity is bounded away from zero and increases with the density, similar to the one dimensional case.
However, it reaches its maximum at an intermediate density and subsequently decreases up to the critical density. This differs
from the qualitative behaviour of the solution found in \cite{Bio11}. At the end of our paper, we discuss the method of \cite{Bio11} and 
point out where, in our opinion, it fails to catch the correct dynamic behaviour of the model. 

The plan of the paper is as follows: in Section \ref{sec:model}, we introduce the model and present our rigorous, one-dimensional
results. Section \ref{sec:stationary distribution} derives the stationary distribution of the generalized SSEP in one dimension,
an important ingredient for the proof. Section \ref{sec:proof} contains the proof of the hydrodynamic limit in one dimension.
Section \ref{sec:2dim} deals with the two-dimensional model, where we present our numerical experiments and their results, compare them to the findings of \cite{Bio11}, 
and discuss likely sources of the discrepancy. 

\section{Model and results} \label{sec:model}

Let $\bbT_N^d = \bbZ^d \cap [1,N]^d$, and $\Omega_N := \{0,1\}^{\bbT_N^d}$. A {\em particle configuration} is an element 
$\eta = (\eta(x))_{x \in \bbT_N^d} \in \Omega_N$, where $\eta(x) = 1$ is interpreted as a particle being present at $x$, 
and $\eta(x)=0$ as no particle being present. Let $A$ be a finite subset of 
$\bbZ^d$, and set
\[
\Omega_{N,A} = \{ \eta \in \Omega_N: \eta(x) \eta(y) \bbone_{\{(x+ A) \cap (y + A) \neq \emptyset\}} = 0 \text{ for all } x,y 
\in \bbT_N^d, x \neq y\}
\]
Addition is understood with periodic boundary conditions, i.e.\ modulo $N$ in each coordinate direction. In other words, 
we interpret $\T^d_N$ as the discrete, $d$-dimensional torus. This will avoid boundary effects later on. 

When $A$ is interpreted as the shape of a particle at $0$, then $x + A$ is the shape of a particle at $x$, and thus 
$\Omega_{N,A}$ is the set of all particle configurations where no two particles overlap. $A = \{0\}$ imposes no extra constraints, 
while for larger $A$, $\Omega_{N,A}$ is a proper subset of $\Omega_N$. We will write $\eta^N$ for the elements of $\Omega_{N,A}$. 

The {\em simple symmetric exclusion process with excluded volume $A$ (SSEP($A$))} is the continuous time Markov process with state space 
$\Omega_{N,A}$ and generator 
\begin{align}\label{eqn:gen_A}
	[L_N f](\eta^N) = \frac12 \sum_{x \in \bbT_N^d} \sum_{y \sim x} w_A(\eta^N,x,y) (f([\eta^N]^{x,y}) - f(\eta)), 
\end{align} 
with
\[
w_A(\eta^N, x,y) = \eta^N(x) \prod_{z \neq x} \Big( 1 - \eta^N(z) 
	\bbone_{(y + A) \cap (z+A) \neq \emptyset} \Big), 
\]
acting on the bounded functions from $\Omega_{N,A}$ to $\bbR$. Here, as usual, $[\eta^N]^{x,y}$ 
is the configuration where the values at $x$ and $y$ are swapped: $[\eta^N]^{x,y}(x) = \eta^N (y)$, 
$[\eta^N]^{x,y}(y) = \eta^N (x)$, and $[\eta^N]^{x,y}(z) = \eta^N (z)$ for all other $z$. 
The relation $y \sim x$ means that $x$ and $y$ are neighbours in $\T_N^d$, where again we use periodic boundary conditions. 

Notice that $w_A(\eta^N,x,y) = 0$ if $\eta^N(x) = 0$ or if $[\eta^N]^{x,y} \notin \Omega_{N,A}$, otherwise it is $1$. This means that 
particles jump to neighbouring sites with rate $1$ if and only if the overlap exclusion condition is satisfied for the new configuration. 
In particular, the dynamics indeed leaves $\Omega_{N,A}$ invariant. Given an initial condition
$\bar \eta^N \in \Omega_{N,A}$, we have 
\[
\bbE(f(\eta^N_t) | \eta_0 = \bar \eta^N) = (P_t^N f)(\bar \eta^N), \qquad \text{where } \qquad P_t^N = \e{t L_N}. 
\]
A good introduction into these types of processes can be found in \cite{Liggett10}. 
An equivalent discrete time dynamics, which will be used for numerical simulations, is 
that a particle is selected uniformly at random, and it is proposed to move it to a uniformly chosen neighbouring lattice site. If the proposed
configuration satisfies the exclusion rule, the move is made, otherwise it is not made. Then, the next particle is selected, and so on.

Let $\bbT^d = [0,1]^d$. The {\em empirical measure} of a configuration $\eta^N \in \Omega_{N,A}$ is the finite, atomic measure  
on $\bbT^d$ given by 
\begin{align}
\label{eq:empirical measure}
\pi_{\eta^N}^{N} (du):= \frac{1}{N^d} \sum\limits_{x \in \T_N^d} \eta^N (x) \delta_{\frac{x}{N}} (\diff u),
\end{align}
where $\delta_y$ is the unit point mass at $y \in \bbT^d$.
Write for short $\pi_\eta^N=\pi_{\eta^N}^N$. Thus,  
for  continuous $G: \T^d \to \bbR$,
\begin{align}\label{summeempirisches1} 
	\langle \pi_{\eta}^{N}, G \rangle := \int\limits_{\T^d} G(u) \pi_{\eta}^{N} (\diff u) = \frac{1}{N^d} \sum\limits_{x \in \T_N^d} G(\tfrac{x}{N}) \eta^N (x) . 
\end{align}
The hydrodynamic limit studies the sequence of the time-dependent, random measures $\pi_{\eta_t}^N$ as $N \to \infty$, 
where $(\eta_t^N)$ is the SSEP($A$) on $\Omega_{N,A}$.  Some care is needed
with the topologies: for the set $\caM_+(\T^d)$ of positive measures on $\T^d$, we use the weak topology: $(\nu_n)$ converges 
weakly to some measure $\nu$ if for all bounded, continuous real functions $G$ on $\T$, we have 
\[
\lim_{n \to \infty} \langle \nu_n, G \rangle = \langle \nu, G \rangle.
\] 
This topology is metrizable: Choosing an $\| . \|_\infty$-dense subset $(G_k)$ of the bounded, 
continuous functions on $\T$ and defining 
\begin{equation} \label{eq:metric}
d(\nu, \tilde \nu) = \sum_{k \in \bbN} 2^{-k} \frac{|\langle \nu, G_k \rangle - \langle \tilde \nu, G_k \rangle|}
{1 + |\langle \nu, G_k \rangle - \langle \tilde \nu, G_k \rangle|}
\end{equation}
makes $\caM_+(\T^d)$ a complete metric space. We write $D([0,T], \caM_+(\T^d))$ for the set of functions from $[0,T]$ to 
$\caM_+(\T^d)$ that are right continuous with left limits, and use the Skorohod topology, see e.g. \cite{billingsley}. 
With this, $D([0,T], \caM_+(\T^d))$ is a complete, separable metric space, and $(t \mapsto \pi_{\eta_t}^N)_{N \in \bbN}$ 
is a sequence of $D([0,T], \caM_+(\T^d))$-valued random variables. For these random variables, we will again use the topology of weak 
convergence, but this time with respect to the distribution of $(\eta^N_t)_{t \leq T}$. 

For a complete description of the processes $(\eta_t^N)$, 
we still need to specify the sequence of initial conditions $(\eta_0^N)$. We specialise to the case where the 
resulting sequence $(\pi_{\eta_0}^N)$ of empirical measures converges to a non-random limiting measure, and in addition assume
that this measure has a Lebesgue density. Since for sequences of random variables with deterministic limits, convergence in probability
and weak convergence are equivalent, the following criterion is formulated in the former, which has technical advantages.  
Let $\rho_0$ be an integrable function on $\T$, and for each $N$ let $\eta_0^N$ be a random element of $\Omega_{N,A}$. 
We say that the sequence of random measures $(\pi^N_{\eta_0})$ is {\em associated to the profile} $\rho_0$ 
if it converges to the deterministic measure $\rho_0(x) \dd x$ in probability, i.e.\ if 
\[
\lim_{N \to \infty} \bbP \Big( \Big| \langle \pi^N_{\eta_0}, G \rangle - \int_\T \rho_0(x) G(x) \dd x \Big| > \eps \Big) = 0
\]
for all $\eps > 0$ and all bounded, continuous functions $G$. 

We can now formulate exactly the type of convergence we seek. Let $(\eta_0^N)_{N \in \bbN}$ be a sequence
of random elements of $\Omega_{N,A}$ such that the induced empirical measures $(\pi^N_{\eta_0})$ are associated to a profile $\rho_0$. 
Let $(\eta^N_t)_{t \geq 0, N \in \bbN}$ be the sequence of stochastic processes with initial conditions $(\eta_0^N)_{N \in \bbN}$
and generators given in \eqref{eqn:gen_A}, and let $((\pi^N_{N^2 t})_{t \in [0,T]})_{N \in \bbN}$ be the sequence of induced empirical measures, 
viewed as random elements of $D([0,T], \caM_+(\T^d))$. 
We say that {\em the hydrodynamic limit} of this sequence of processes exists if this sequence  
converges weakly (with respect to the distribution of $(\eta_t)_{t \leq T}$) to some (possibly random) element of 
$D([0,T], \caM_+(\T^d))$. \\[2mm]
\begin{bem}
	\begin{enumerate}
		\item[(a)] Note that time is scaled by a factor of $N^2$ in the definition of the hydrodynamic limit. This is the usual diffusive scaling, 
which is necessary to obtain non-trivial limits for symmetric particle systems.
		\item[(b)] In many important cases, the weak limit $(\pi_t)_{t \in [0,T]}$ of the sequence $((\pi_{\eta_{N^2 t}}^N)_{t \in [0,T]})_{N \in \bbN}$, has a Lebesgue density $\rho_t$ for each $\pi_t$. This gives rise
to a function $\rho: \bbT^d \times \bbR \to \bbR$, $(t,x) \mapsto \rho_t(x)$, which describes the evolution of the particle density 
in the limit on infinitely many particles.
	\end{enumerate}
\end{bem}

We now specialise to the one-dimensional case $d=1$. The overlap rule implies that the only interesting
sets $A$ are of the form $\{0, \ldots, r-1\}$ with $r \geq 1$. The convention is such that $r$ is the diameter of the particle, so 
$r=1$ leads to the SSEP. 
We lighten the notation by writing $\bbT_N = \bbT_N^1$, and $\bbT = [0,1] = \bbT^1$, and
$\Omega_{N,r} = \Omega_{N, \{1, \ldots, r\}}$
Also, we write $\eta_t$ instead of $\eta_t^N$ whenever $N$ is not explicitly needed. 
 The generator then becomes 
\begin{align} \label{gen} \begin{split}
[L_N f](\eta) = \frac12 \sum\limits_{\substack{x,y \in \T_N, \\ |x-y|=1}} \eta(x) \prod\limits_{k=0}^{r-1} (1-\eta (y+(y-x)k)) 
\big( f(\eta^{x,y})-f(\eta) \big). 
\end{split}
\end{align} 

\begin{theo} \label{thm:main}
Let $\rho_0$ an integrable function on $\T$, $r \geq 1$, for each $N \in \bbN$ let $\eta_0^N$ be an $\Omega_{N,r}$-valued 
random variable, and let $(\eta_t)$ be the Markov process with generator \eqref{gen} and initial distribution $\eta_0^N$. 
Assume that the sequence $(\pi_{\eta_0}^N)$ is associated to the profile $\rho_0$. Then the hydrodynamic limit of the 
empirical densities of $(\eta_t)$ exists, is deterministic, and is given by $t \mapsto \rho(t,x) \dd x$, where $\rho$ is 
the unique solution of the porous media equation
\begin{align} \label{PDE}
\begin{cases}
\partial_t \rho &= \frac{1}{2 } \partial_{x} (D(\rho) \partial_x \rho), \\  \rho(0,\cdot) &= \rho_0 (\cdot) ,
\end{cases}
\end{align}
where 
\[
D(\rho) = \frac{1}{(1 - (r-1) \rho)^2}.
\]
\end{theo}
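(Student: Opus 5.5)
The plan is to follow the entropy method of Guo--Papanicolaou--Varadhan as presented in \cite{Kipnis99}, exploiting that the one-dimensional SSEP($A$) is of gradient type. The first step is to identify the invariant measures. On the torus, the uniform measure on $\Omega_{N,r}$ with fixed particle number is reversible, because every allowed jump has rate $1$ in both directions. For the local analysis I would use the grand-canonical (translation invariant) measures $\nu_\rho$ on $\bbZ$. These are renewal measures in which gaps between consecutive particles equal $r-1+G$ with $G$ geometric. In particular, conditioned on a particle at $x$, the probability that the next $r$ sites $x+1,\dots,x+r$ are empty (a jump to the right is possible) equals $\bbP(G\geq 1)$. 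A short computation expresses this in terms of the density $\rho$: with $q=\bbP(G\ge1)$ the mean spacing is $1/\rho=r+q/(1-q)$, so $q = 1-\rho/(1-(r-1)\rho)$ after rearranging.

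Second, I would verify the gradient condition. The current across the bond $(x,x+1)$ is
\[
W_{x,x+1}=\eta(x)\prod_{k=1}^{r}(1-\eta(x+k)) - \eta(x+1)\prod_{k=1}^{r}(1-\eta(x+1-k)).
\]
Using exclusion (at most one particle in any window of length $r$), the product $\eta(x)\prod_{k=1}^r(1-\eta(x+k))$ equals $\eta(x)-\sum_{j=1}^{r}\eta(x)\eta(x+j)$, and the terms with $j\le r-1$ vanish. So it reduces to $\eta(x)-\eta(x)\eta(x+r)$, and similarly for the leftward term. Hence $W_{x,x+1}=\tau_x h-\tau_{x+1}h$ with
\[
h(\eta)=\eta(0)-\eta(-r)\eta(0)+\sum_{j=1}^{r-1}\eta(-j)(1-\eta(-j+r)\cdots)
\]
or some explicit local function of this form; I would fix it by direct bookkeeping.

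Dynkin's martingale for $\langle\pi^N_t,G\rangle$ then gives
\[
\langle\pi_t^N,G\rangle-\langle\pi_0^N,G\rangle-\frac12\int_0^t N^{-1}\sum_x \Delta_N G(x/N)\,\tau_x h(\eta_s)\,ds + o(1).
\]
The quadratic variation of the martingale is $O(N^{-1})$. Tightness in $D([0,T],\caM_+(\T))$ follows from Aldous' criterion exactly as for the SSEP, since $\pi^N$ has total mass at most $1$ and the jump rates are bounded.

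The third, main step is the replacement lemma. The aim is to replace $N^{-1}\sum_x \Delta_NG\,\tau_xh$ by $N^{-1}\sum_x\Delta_NG\,\Phi(\eta^{\eps N}(x))$, with $\Phi(\rho)=E_{\nu_\rho}[h]$ and $\eta^{\ell}$ the empirical density in a box. This is done via the one-block and two-block estimates, using the relative entropy bound $H(\mu^N\mid\nu^N)\le CN$ (valid for any initial law, since $\Omega_{N,r}$ has at most $2^N$ elements and the uniform reference measure is invariant) and the Dirichlet form bound. The delicate points are: (i) equivalence of ensembles for the canonical measures on boxes with hard-core constraint, obtained from a local limit theorem for sums of i.i.d. gaps; (ii) irreducibility of the dynamics on each box with fixed particle number, needed for the one-block estimate; and (iii) the two-block estimate, which requires moving particles across distances $\eps N$ via path-type arguments, where particles cannot pass each other and mass must be transported by sequences of allowed jumps. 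I expect irreducibility plus this path argument to be the main obstacle, because at maximal density $1/r$ the configuration is frozen. I would handle it by treating the near-jammed blocks separately (they contribute negligibly, or $\Phi$ is continuous up to $1/r$). A computation of $\Phi$ from the renewal structure gives $\Phi(\rho)=\rho/(1-(r-1)\rho)$, whose derivative is exactly $D(\rho)$. So the limit satisfies the weak form of $\partial_t\rho=\frac12\partial_x^2\Phi(\rho)=\frac12\partial_x(D(\rho)\partial_x\rho)$.

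Finally, to identify the limit uniquely, I would prove an energy estimate showing that $\Phi(\rho)\in L^2(0,T;H^1)$, obtained from the entropy and Dirichlet form bounds as in \cite[Ch.~5]{Kipnis99}. Uniqueness of weak solutions of the nondegenerate (since $D\ge1$) porous medium equation in this class then follows by the standard $L^1$/duality argument, using that $\Phi$ is strictly increasing and Lipschitz on $[0,1/r-\delta]$. Some care is needed with the bound $\rho\le 1/r$, which holds for any limit point. Convergence of the full sequence to the deterministic $\rho$ follows.
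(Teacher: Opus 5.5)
Your plan has the same skeleton as the paper's proof. Both use the gradient decomposition of the current, the Dynkin martingale with quadratic variation $O(N^{-1})$, Aldous' criterion, a replacement lemma under an $O(N)$ entropy bound, the identification $\Phi(\rho)=\rho/(1-(r-1)\rho)$ with $\Phi'=D$, and uniqueness of weak solutions. Two ingredients are obtained differently.

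\emph{The key correlation.} The paper builds finite-volume measures $\nu^N_\rho$ as binomial mixtures of the uniform canonical measures. It then proves Proposition~\ref{cruciallemma} from the exact count \eqref{KN}, the polynomial identity of Lemma~\ref{technical} and dominated convergence, and it obtains an infinite-volume measure only abstractly, as a subsequential limit. Your renewal description (spacings $r+G$ with $G$ geometric, $\bbP(G=0)=1-q=\rho/(1-(r-1)\rho)$) gives $\nu_\rho(\eta(0)=\eta(r)=1)=\rho(1-q)=\rho^2/(1-(r-1)\rho)$ in one line. It also identifies the limiting measure explicitly, which is what an equivalence-of-ensembles argument needs.

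\emph{Replacement lemma and uniqueness.} The paper does not carry out the program you describe. It cites \cite[Lemma 3.3]{Guo88} and checks only the entropy bound, via $\nu^N_\rho(\eta)\ge e^{-C_0N}$; this is equivalent to your bound $\log|\Omega_{N,r}|\le N\log 2$ against the invariant uniform measure. For uniqueness it cites \cite{Kipnis99} without an energy estimate. That estimate is indeed unnecessary here, since $1\le D\le r^2$ on $[0,1/r]$, so $\Phi$ is Lipschitz on all of $[0,1/r]$, not only away from $1/r$. Your one-block/two-block discussion therefore goes beyond the paper rather than falling short of it.

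Two corrections.

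First, the $h$ you display is not right. The telescoping identity $W_{x,x+1}=\tau_xh-\tau_{x+1}h$ holds for $h(\eta)=\eta(0)+\sum_{j=1}^{r-1}\eta(-j)\eta(r-j)$. It is this $h$ whose expectation $\rho+(r-1)\rho(1-q)$ equals your $\Phi$; the version you wrote has a different expectation.

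Second, jammed regions are not the real obstacle, and treating them separately is not needed.
\begin{itemize}
\item On a closed box with $k$ particles, the coordinates $y_i=x_i-(r-1)(i-1)$ turn the dynamics into the SSEP. So it is irreducible for every $k$, and a completely packed class is a single configuration.
\item In the two-block step, a particle at $a$ can be moved to a vacant $b>a$ by a relay. First move the last intermediate particle into $b$, then move each earlier particle into its successor's old site. This uses exactly $b-a$ allowed nearest-neighbour jumps whenever the resulting configuration is admissible, jammed stretches included.
\item What does need care is that the rates of bonds within distance $r$ of a box boundary depend on sites outside the box. The one-block reduction should therefore use interior bonds only.
\end{itemize}
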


Note that the diffusivity increases as the density increases towards its maximum value at $1/r$. On a qualitative level, 
this can actually be understood intuitively: $D(\rho)$ is the factor that determines how much  a given 
density slope $\partial_x \rho(x)$ drives particles towards lower densities. 
For low densities, the fact that the particles on the one side of $x$ are a bit 
more densely packed than on the other side only has a small effect on the $r$-exclusion rule; thus in the limit $\rho \to 0$, 
the diffusion constant is the same as for the SSEP. For high densities, however, unequal densities on the two sides of $x$ have 
an effect that is stronger than for the SSEP, due to more excluded volume per particle. So the tendency for particles to travel
from high density regions to low density regions increases. 

This is in contrast with our numerical findings for the two-dimensional model: when we set  
\begin{equation} \label{eq:A 2dim}
A = \{0, \ldots, r_1-1\} \times \{0, \ldots, r_2-1\},
\end{equation}
we find numerically that the diffusion constant first increases similar to the one-dimensional case, but later decreases again. 
See Figure \ref{fig:results} in Section \ref{sec:2dim}, where we will also discuss why this behaviour can be expected. 
As soon as $A \neq \{0\}$, 
the dynamics is no longer of gradient type in two dimensions, which makes the problem of the hydrodynamic limit 
much harder. While significant recent progress has been made by Funaki, Gu and Wang \cite{Funaki2026} for non-gradient exclusion processes, 
their methods require a 'soft' modification of the jump rates due to the presence of other (non nearest-neighbour) particles, in particular 
they need to be bounded away from zero, the only hard core constraint being the one already present in the SSEP, i.e.\ no two particles per site. 
So the SSEP($A$) is not covered. Moreover, the presence of the phase transition mentioned in the introduction destroys the uniqueness of the 
stationary distribution for densities above the critical one, adding further difficulties. We therefore restrict our attention to the 
subcritical regime. Since we cannot prove the existence of the hydrodynamic limit, we have to assume it. Given this, we 
prove in Section \ref{sec:2dim} that the diffusion matrix must be diagonal if $A$ is of the form \eqref{eq:A 2dim}, and numerically 
estimate its diagonal elements. In order not to overburden this section, we refer to Section \ref{sec:2dim} for further details.

\section{The stationary distribution in 1D}\label{sec:stationary distribution}
In this section and the next section, we will use the term $RrEP$ (Range $r$ exclusion process) for the SSEP($A$) with $A = \{0, \ldots, r-1\}$.
Here, we construct the stationary distribution of the RrEP. This will be a crucial ingredient for our proof in Section \ref{sec:proof}.

As the number of particles is conserved under the time evolution, the state space $\Omega_N$ can be divided into non-communicating subspaces $\Omega_{N,K} := \{ \eta \in \Omega_N \ : \ \sum_{x \in \T_N} \eta (x) = K \}$, each being irreducible due to the assumed periodic boundary conditions. Hence, one gets a unique stationary measure for each $K \in \{0,\dots,\lfloor \frac{N}{r} \rfloor\}$ and a whole family of stationary measures (called {\em canonical measures}) $(\mu_{N,K})_{K \in \{0,\dots,\lfloor \frac{N}{r} \rfloor \}}$ on $\Omega_N$. 
\begin{lem}
For $K \in \{0,\dots,\lfloor \frac{N}{r} \rfloor \}$, the uniform distribution $\mu_{N,K}$ on $\Omega_{N,K}$ is invariant for the RrEP.
\end{lem}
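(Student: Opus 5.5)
The plan is to show that the RrEP is reversible with respect to the uniform measure on $\Omega_{N,K}$. Reversibility implies invariance. Here $\Omega_{N,K}$ should be read as the set of admissible configurations, i.e.\ the elements of $\Omega_{N,r}$ with exactly $K$ particles, since the dynamics lives on $\Omega_{N,r}$. Concretely, write $c(\eta,\eta')$ for the jump rate from $\eta$ to $\eta'\neq\eta$ read off from \eqref{gen}. Because $\mu_{N,K}$ gives equal mass to every configuration in $\Omega_{N,K}$, detailed balance $\mu_{N,K}(\eta)c(\eta,\eta')=\mu_{N,K}(\eta')c(\eta',\eta)$ reduces to the symmetry
\[
c(\eta,\eta')=c(\eta',\eta)\qquad\text{for all }\eta,\eta'\in\Omega_{N,K}.
\]
Given this symmetry, for any bounded $f$ we get
\[
\sum_\eta \mu_{N,K}(\eta)[L_Nf](\eta)=\frac{1}{|\Omega_{N,K}|}\sum_{\eta,\eta'}c(\eta,\eta')\big(f(\eta')-f(\eta)\big).
\]
The right-hand side vanishes after swapping $\eta\leftrightarrow\eta'$ in one of the two terms. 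Hence $\mu_{N,K}L_N=0$, which gives invariance.

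The key step is the symmetry of the rates. Fix $\eta\in\Omega_{N,K}$ and a nearest-neighbour pair $x,y$ with $y=x\pm1$. The term indexed by $(x,y)$ in \eqref{gen} is nonzero exactly when $\eta(x)=1$ and the sites $y,y+(y-x),\dots,y+(r-1)(y-x)$ are all empty. In that case $\eta^{x,y}$ is obtained by moving the particle from $x$ to $y$, and the rate is $1$. Note that $\eta(y)=0$ is part of this condition, so the swap really is a jump.

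I then check the reverse move. In $\eta'=\eta^{x,y}$, consider the term indexed by $(y,x)$. We have $\eta'(y)=1$, and we need $x,\,x-(y-x),\dots,x-(r-1)(y-x)$ to be empty in $\eta'$. Site $x$ is empty by construction. The remaining sites $x-k(y-x)$, $1\le k\le r-1$, lie within distance $r-1$ of $x$ on the side opposite to $y$. They must be empty in $\eta$, and hence in $\eta'$, because $\eta\in\Omega_{N,r}$: the particle at $x$ forbids any other particle within distance $r-1$. So the reverse term equals $1$ and produces exactly $\eta$.

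It remains to rule out any other pair $(x',y')$ that also maps $\eta$ to $\eta'$, since such a pair would add a second contribution to $c(\eta,\eta')$. The configurations $\eta$ and $\eta'$ differ exactly at $\{x,y\}$, and a swap changes at most two sites. So the pair is forced to be $\{x,y\}$ and the direction is forced by which site is occupied. Thus $c(\eta,\eta')=c(\eta',\eta)\in\{0,1\}$.

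The only point needing care is the admissibility argument in the reverse-move check. It uses the exclusion constraint in $\eta$ on the side of $x$ opposite to $y$. On the torus with $N\ge r$ and $K\le\lfloor N/r\rfloor$ the relevant sites are distinct, so no wrap-around issue arises. Everything else is routine.
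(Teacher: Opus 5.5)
Your proof is correct and is essentially the paper's argument. The paper also verifies $\int L_N f\,\dd\mu_{N,K}=0$ using your key observation: a permitted jump $x\to y$ from $\eta$ makes the reverse jump $y\to x$ permitted from $\eta^{x,y}$, because the sites on the far side of $x$ are empty by admissibility. It then cancels the $(x,y)$- and $(y,x)$-terms via the substitution $\eta\mapsto\eta^{x,y}$ under the uniform weight.

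Your detailed-balance version with aggregated rates $c(\eta,\eta')$ is the same computation, organized by pairs of configurations rather than by bonds. Two small nitpicks, neither of which affects the argument. The rate in \eqref{gen} is $\tfrac12$ rather than $1$. And at $N=r$ the sites $x-k(y-x)$ are not all distinct from $y$; there, however, the forward rate vanishes, so the step is vacuous.
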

\begin{proof}
It is enough to show $\int L_N f \diff \mu_{N,K} = 0$ for all bounded $f$. Given a configuration $\eta \in \Omega_{N,K}$ with $\eta(x)=1$ it follows that $\eta^{x,x+1}(x+1) \cdot (1-\eta^{x,x+1}(x-r+1))=1$ for all $x \in \T_N$, as well as similarly for jumps to the other side. Thus
\begin{align*}
&\int (L_N f)(\eta) \diff \mu_{N,K} (\eta) \\ = & \sum\limits_{\eta \in \Omega_{N,K}} \mu_{N,K} (\eta) \frac12 \sum\limits_{\substack{x,y \in \T_N, \\ |x-y|=1}} \eta (x) (1-\eta(y+(y-x)(r-1))) \left( f(\eta^{x,y}) - f (\eta) \right)  \\ = &\frac{1}{2 |\Omega_{N,K}|} \sum\limits_{\substack{x,y \in \T_N, \\ |x-y|=1}}  \sum\limits_{\eta \in \Omega_{N,K}} \eta (x) (1-\eta(y+(y-x)(r-1))) \left( f(\eta^{x,y}) - f (\eta) \right) = 0 ,\end{align*}
due to \begin{align*} &\eta (x) (1-\eta(y+(y-x)(r-1))) ( f(\eta^{x,y})-f(\eta)) \\ = &- \eta^{x,y}(y) (1-\eta^{x,y}(x+(x-y)(r-1))) (f(\eta^{x,y})-f(\eta))\end{align*} whenever $\eta,\eta^{x,y} \in \Omega_{N,K}$ for $|x-y|=1$.
\end{proof} 
In the following we will focus only on $K \geq 1$ and conveniently think of the measures $\mu_{N,K}$ as being defined on $\Omega_{N}$, with 
$\mu_{N,K} ( \Omega_{N,K}) = 1$. 

Before stating the next lemma regarding the probability of certain events under $\mu_{N,K}$, it is important to know how many configurations are possible in total. In order to find $|\Omega_{N,K}|$ we will identify the connection to the range-1-exclusion case. Due to the periodic lattice, we have to condition first on $r-1$ neighbouring sites for some fixed $x\in\bbT_N$: \begin{align*} | \Omega_{N,K} | =  &| \underbrace{\{ \eta \in \Omega_{N,K}  :  \eta (x+k) = 0 \ \forall k \in \{0,\dots,r-2\} \}}_{=: \Omega_{N,K}^{0}}| \\ &+ |\underbrace{\{ \eta \in \Omega_{N,K}  :  \eta (x) = 1 \}}_{=: \Omega_{N,K}^{1,x}} |  + \dots + |\underbrace{\{ \eta \in \Omega_{N,K}  :  \eta (x+(r-2)) = 1 \}}_{=: \Omega_{N,K}^{1,x+(r-2)}} |  . \end{align*} 
Starting with the condition $\eta (x+k) = 0$ for all $k \in \{0,\dots,r-2\}$, one gets a bijection $\chi : \Omega_{N,K}^{0} \to \big\{ \eta \in \{0,1\}^{{N-(r-1)K}} \ : \ \sum_{j=1}^{N-(r-1)K}  \eta (j) = K  \big\}$ by removing the first $r-1$ rightward neighbouring sites to each particle (see Figure \ref{exampp} for an example with $r=2$). 
Since there are $K$ particles, the image space consists of $N-(r-1)K$ sites and particles might very well be neighbours, such that we obtained a suitable state space for the R1EP. Its cardinality is given by \[ | \Omega_{N,K}^{0}| = {N-(r-1)K \choose K} . \]
\begin{figure}[t]
\centering
\begin{tikzpicture} 

\node[scale=0.8] (a) at (-1,0) {$\bar{\eta}$:};

\node (A) at (0,0) [circle,fill=white,draw,minimum size=0.5cm] {};
\node (B) at (0.9,0) [circle,fill=white,draw,minimum size=0.5cm] {};
\node (C) at (1.8,0) [circle,fill=black,draw,minimum size=0.5cm] {};
\node (D) at (2.7,0) [circle,fill=white,draw,minimum size=0.5cm] {};
\node (E) at (3.6,0) [circle,fill=black,draw,minimum size=0.5cm] {};
\node (F) at (4.5,0) [circle,fill=white,draw,minimum size=0.5cm] {};
\node (G) at (5.4,0) [circle,fill=black,draw,minimum size=0.5cm] {};
\node (H) at (6.3,0) [circle,fill=white,draw,minimum size=0.5cm] {};
\node (I) at (7.2,0) [circle,fill=white,draw,minimum size=0.5cm] {};
\node (J) at (8.1,0) [circle,fill=white,draw,minimum size=0.5cm] {};
\node (K) at (9,0) [circle,fill=black,draw,minimum size=0.5cm] {};
\node (K) at (9.9,0) [circle,fill=white,draw,minimum size=0.5cm] {};

\draw[<->,thick,cyan] (4.5,-0.5) -- (4.5,-2);

\draw[thick,red] (2.4,-0.5) -- (3,0.5);
\draw[thick,red] (4.2,-0.5) -- (4.8,0.5);
\draw[thick,red] (6,-0.5) -- (6.6,0.5);
\draw[thick,red] (2.4,0.5) -- (3,-0.5);
\draw[thick,red] (4.2,0.5) -- (4.8,-0.5);
\draw[thick,red] (6,0.5) -- (6.6,-0.5);
\draw[thick,red] (9.6,0.5) -- (10.2,-0.5);
\draw[thick,red] (9.6,-0.5) -- (10.2,0.5);

\node[scale=0.8] (a) at (-1,-2.5) {$\eta^{*}$:};
\node[scale=1.2] (a) at (4.75,-1.25) {$\chi$};

\node (A) at (1.5,-2.5) [circle,fill=white,draw,minimum size=0.5cm] {};
\node (B) at (2.4,-2.5) [circle,fill=white,draw,minimum size=0.5cm] {};
\node (C) at (3.3,-2.5) [circle,fill=black,draw,minimum size=0.5cm] {};
\node (D) at (4.2,-2.5) [circle,fill=black,draw,minimum size=0.5cm] {};
\node (E) at (5.1,-2.5) [circle,fill=black,draw,minimum size=0.5cm] {};
\node (F) at (6,-2.5) [circle,fill=white,draw,minimum size=0.5cm] {};
\node (G) at (6.9,-2.5) [circle,fill=white,draw,minimum size=0.5cm] {};
\node (H) at (7.8,-2.5) [circle,fill=black,draw,minimum size=0.5cm] {};
\draw [thick] (9.6,-0.35) rectangle (10.2,0.35);

\end{tikzpicture}

\caption{Example of two corresponding configurations $\eta$ for the R2EP and $\eta^{*}$ for the R1EP under the map $\chi$ which removes the rightward (vacant) neighbouring site to each occupied site; a black circle symbolises a particle at this site, a white circle stands for a vacant site; the rectangle around the rightmost site of $\bar{\eta}$ reflects the conditioning on $\{ \eta \in \Omega_{12,4} \ | \ \eta (12) = 0 \}$.}
\label{exampp}
\end{figure}
Regarding the other conditions, note at first that $|\Omega_{N,K}^{1,x}| = \dots = | \Omega_{N,K}^{1,x+(r-2)}|$ once again due to periodicity. 
Assuming that $\eta(x)=1$, the matter is slightly different compared to the above case. There is still a bijection between the state space for the RrEP and an appropriate state space for the R1EP, but one has to take into account that we have to distribute $K-1$ particles instead of $K$ (as one is already on site $x$), and that there is one less site at disposal. Hence, \begin{align} |\Omega_{N,K}^{1,x} | = {N-(r-1)K-1 \choose K-1}, \label{wurst} \end{align} giving in total \begin{align} | \Omega_{N,K} | &= |\Omega_{N,K}^{0} | + (r-1) |\Omega_{N,K}^{1,x}| \notag \\ &=  {N-(r-1)K \choose K} + (r-1) {N-(r-1)K-1 \choose K-1} . \label{Kardinal} \end{align}

\begin{lem}
For all $x \in \T_N$, we have
 \begin{align} \mu_{N,K} ( \{ \eta \in \Omega_{N,K} \ : \ \eta (x) = 1 \}) &= \frac{K}{N}, \label{trivial1}\\  
 \mu_{N,K} (\{ \eta \in \Omega_{N,K} \ : \ \eta (x) = \eta (x+r) = 1 \}) &= \frac{K}{N} \cdot \frac{K-1}{N-(r-1)K-1} . \label{trivial2}
   \end{align}
\end{lem}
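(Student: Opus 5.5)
For \eqref{trivial1} we use translation invariance. The uniform measure $\mu_{N,K}$ is invariant under the shift $\tau\eta(\cdot)=\eta(\cdot+1)$ on the torus, since $\tau$ is a bijection of $\Omega_{N,K}$. Hence $\mu_{N,K}(\eta(x)=1)$ does not depend on $x$. Summing over $x\in\T_N$ and using $\sum_x \eta(x)=K$ on $\Omega_{N,K}$ gives $N\,\mu_{N,K}(\eta(x)=1)=K$. As a consistency check, this also follows from counting: \eqref{wurst} gives $|\Omega^{1,x}_{N,K}|={N-(r-1)K-1\choose K-1}$, and dividing by \eqref{Kardinal} yields $K/N$. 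The last step uses the identity
\[
{N-(r-1)K\choose K}=\tfrac{N-(r-1)K}{K}{N-(r-1)K-1\choose K-1},
\]
so that $|\Omega_{N,K}|=\tfrac{N}{K}{N-(r-1)K-1\choose K-1}$. We record this closed form because it is needed below.

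For \eqref{trivial2} we count directly, in the same way as \eqref{wurst} was obtained. Consider configurations in $\Omega_{N,K}$ with $\eta(x)=\eta(x+r)=1$. The exclusion rule forces $\eta(x+1)=\dots=\eta(x+r-1)=0$, and likewise the sites $x+r+1,\dots,x+2r-1$ are empty. The remaining $K-2$ particles must be placed in the arc from $x+2r$ to $x-1$ (going around the torus), with the constraint that each particle's $r-1$ right neighbours stay within the arc. Since the particle at $x$ blocks from the right, the particles in the arc can be handled by the bijection $\chi$. We remove the $r-1$ sites to the right of every particle, including those attached to the particles at $x$ and $x+r$. The arc has length $N-2r$ and must accommodate $K-2$ particles each taking $r$ sites. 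After removing $r-1$ sites per particle we are left with a free linear segment of $N-2r-(r-1)(K-2)=N-(r-1)K-2$ sites carrying $K-2$ particles with ordinary exclusion. This gives the count ${N-(r-1)K-2\choose K-2}$.

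Dividing by $|\Omega_{N,K}|=\tfrac NK{N-(r-1)K-1\choose K-1}$ and using ${M-1\choose K-2}/{M\choose K-1}=\tfrac{K-1}{M}$ with $M=N-(r-1)K-1$ gives exactly $\tfrac KN\cdot\tfrac{K-1}{N-(r-1)K-1}$. The only delicate point is the bookkeeping in the count for the arc: we must check that the rightmost particle of the arc does not wrap onto $x$, and that there is no boundary issue next to $x+r$. The left-blocking by $x$ and the right-shift convention of $\chi$ handle this, exactly as in the derivation of \eqref{wurst}. The rest is routine binomial algebra.
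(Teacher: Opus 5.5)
Your argument is correct. For \eqref{trivial1} you take a genuinely different route from the paper. The paper divides the count \eqref{wurst} by the total \eqref{Kardinal}. You instead use that the shift is a bijection of $\Omega_{N,K}$, so $\mu_{N,K}(\eta(x)=1)$ does not depend on $x$, and then sum over $x$ using $\sum_x\eta(x)=K$. This is shorter and does not depend on the case decomposition behind \eqref{Kardinal}. In fact it makes \eqref{Kardinal} unnecessary: \eqref{trivial1} together with \eqref{wurst} gives directly $|\Omega_{N,K}|=\frac NK\binom{N-(r-1)K-1}{K-1}$, which is the closed form you need for \eqref{trivial2}. The paper's computation, in turn, produces that cardinality explicitly and confirms that \eqref{wurst} and \eqref{Kardinal} are mutually consistent.

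For \eqref{trivial2} you follow the paper's counting approach, but you actually justify the count $\binom{N-(r-1)K-2}{K-2}$, which the paper only asserts ``analogously''. Your bookkeeping paragraph is a little confused (``blocks from the right'' versus ``left-blocking by $x$''), but nothing delicate is involved. The other $K-2$ blocks $\{y,\dots,y+r-1\}$ must avoid $\{x,\dots,x+2r-1\}$. Hence they lie entirely in the linear segment $\{x+2r,\dots,x+N-1\}$ of $N-2r$ sites, and cannot wrap around. Shifting the ordered left endpoints by $y_i\mapsto y_i-(i-1)(r-1)$ then gives $\binom{N-2r-(r-1)(K-2)}{K-2}$ at once.
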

\begin{proof}
Making use of \eqref{wurst} and \eqref{Kardinal}, we get
 \[ \mu_{N,K} ( \{ \eta \in \Omega_{N,K} \ : \ \eta (x) = 1 \}) = \frac{ {N-(r-1)K-1 \choose K-1}}{(r-1){N-(r-1)K-1 \choose K-1} + {N- (r-1) K \choose K}} = \frac{K}{N} . \] 
Analogously to the above, combinatorics give \[ | \{ \eta \in \Omega_{N,K} \ : \ \eta (x) = \eta (x+r) = 1 \}| = {N-(r-1)K-2 \choose K-2} , \] and thus \begin{align}\label{KN} \begin{split} \mu_{N,K} (\{ \eta \in \Omega_{N,K} \ : \ \eta (x) = \eta (x+r) = 1 \}) &= \frac{{N-(r-1)K-2 \choose K-2}}{(r-1){N-(r-1)K-1 \choose K-1} + {N-(r-1)K \choose K}}  \\ &= \frac{K}{N} \cdot \frac{K-1}{N-(r-1)K-1} . \end{split} \end{align}
\end{proof}

The grand-canonical measure $\nu^{N}_{\rho}$ is obtained by an appropriate convex combination of the canonical measures $\mu_{N,K}$. We set 
\begin{align}\label{gcm}  \nu_{\rho}^{N} = \sum\limits_{K=0}^{\lfloor \frac{N}{r} \rfloor} \alpha_{N,K} (\rho) \mu_{N,K},  \end{align} 
where the $(\alpha_{N,K} (\rho))_{K \in \{0, \dots ,\lfloor \frac{N}{r} \rfloor \}}$ are the weights of a convex combination such that $\nu_{\rho}^{N} ( \{ \eta \in \Omega_N \ : \ \eta (x) = 1\} ) = \rho$.
 In the R1EP-case, the weights are $\alpha_{N_0,K} ( \rho)  = {N_0 \choose K} \rho^{K} (1-\rho)^{N_0 - K} $, which leads to the Bernoulli-product measure. 
In the RrEP-case, the sum only goes up to $\lfloor \frac{N}{r} \rfloor$. In order to still get weights which are concentrated around $\rho \cdot N$, the generic choice is 
\begin{align}\label{weights} \alpha_{N,K} ( \rho) := {\lfloor \frac{N}{r} \rfloor \choose K} \left( \frac{N}{\lfloor \frac{N}{r} \rfloor} \rho \right)^{K} \left( 1-\frac{N}{\lfloor \frac{N}{r} \rfloor} \rho \right)^{\lfloor \frac{N}{r} \rfloor - K} . \end{align}

Obviously we get \[ \sum\limits_{K=0}^{\lfloor \frac{N}{r} \rfloor} \alpha_{N,K}(\rho) = \left(\frac{N}{\lfloor \frac{N}{r} \rfloor} \rho + \left(1-\frac{N}{\lfloor \frac{N}{r} \rfloor} \rho\right)\right)^{\lfloor \frac{N}{r} \rfloor} = 1 , \] but also as desired for all 
$x \in \T_N$
\begin{align*} \nu_{\rho}^{N} ( \{ &\eta \in \Omega_N \  : \  \eta (x) = 1 \}) = \sum\limits_{K=0}^{\lfloor \frac{N}{r} \rfloor} \alpha_{N,K}(\rho) \cdot \mu_{N,K}( \{ \eta \in \Omega_N \  : \  \eta (x) = 1 \}) = \rho , \end{align*} by straightforward computations.
Next, we state a technical lemma that is going to be used in the following proposition.
\begin{lem}\label{technical}
For all $x \in \mathbb{R}$ and $q\in\bbN$,
 \begin{align}\label{claim} \sum\limits_{t=0}^{q} \frac{(-1)^{q-t}}{t! (q-t)!} \cdot \prod\limits_{j=0, j \neq t}^{q} (x-j) = 1 . \end{align} 
 \end{lem}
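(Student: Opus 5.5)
This is Lagrange interpolation of the constant function $1$ at the $q+1$ nodes $0,1,\dots,q$. The plan is to rewrite each coefficient as a Lagrange basis polynomial and then use uniqueness of the interpolating polynomial.

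For a node $t\in\{0,\dots,q\}$ I compute the denominator of the basis polynomial:
\[
\prod_{j=0, j\neq t}^{q} (t-j) = \Big(\prod_{j=0}^{t-1}(t-j)\Big)\Big(\prod_{j=t+1}^{q}(t-j)\Big) = t!\,(-1)^{q-t}(q-t)! .
\]
Since $(-1)^{q-t} = 1/(-1)^{q-t}$, the $t$-th summand in \eqref{claim} equals the Lagrange basis polynomial
\[
\ell_t(x) = \prod_{j=0, j\neq t}^{q} \frac{x-j}{t-j}.
\]
So the left-hand side of \eqref{claim} is $P(x)=\sum_{t=0}^q \ell_t(x)$.

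Next I use the defining property $\ell_t(s)=\delta_{ts}$ for $s\in\{0,\dots,q\}$. For $s\neq t$ the factor $(s-s)$ appears in the product, so $\ell_t(s)=0$. For $s=t$ the numerator and denominator coincide, so $\ell_t(t)=1$. Hence $P(s)=1$ for all $s\in\{0,\dots,q\}$.

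Finally, $P-1$ is a polynomial of degree at most $q$, since each $\ell_t$ has degree $q$. It vanishes at the $q+1$ distinct points $0,\dots,q$, so $P\equiv 1$ on $\mathbb{R}$.

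No step is a real obstacle. The only care needed is the sign bookkeeping in the denominator, i.e.\ checking that $\prod_{j>t}(t-j)=(-1)^{q-t}(q-t)!$.

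As an alternative, one could write $\prod_{j\neq t}(x-j)=\prod_{j=0}^q(x-j)/(x-t)$ and recognise the identity as the partial fraction decomposition of $q!/\prod_{j=0}^q(x-j)$. The interpolation argument is shorter, so I would use it.
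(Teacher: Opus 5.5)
Your proof is correct and is essentially the paper's argument: the paper also treats the left-hand side as a polynomial in $x$ of degree $q$, evaluates it at $x=0,\dots,q$ where only the $t=x$ term survives, and uses the same sign computation $\prod_{j>t}(t-j)=(-1)^{q-t}(q-t)!$ to get the value $1$. Calling the summands Lagrange basis polynomials just repackages that computation. Your ``degree at most $q$'' is the more accurate phrasing, since for $q\geq 1$ the top coefficients sum to $(1-1)^q/q!=0$.
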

\begin{proof}
The left hand side of equation \eqref{claim} is a polynomial of degree $q$ in $x \in \mathbb{R}$, say $P(x)$. It is enough to show $P(0)=P(1)=\dots=P(q)=1$. But for $x \in \{0,1,\dots,q\}$ all but one term are $0$, namely $x=t$. Thus \[
P(x) = \frac{(-1)^{q-x}}{x! (q-x)!} \underbrace{(x-0) \cdot (x-1) \cdot \dots \cdot (x-(x-1))}_{= x!} \cdot \underbrace{(x-(x+1)) \cdot \dots \cdot (x-q)}_{= (-1)^{x-q} (q-x)!}  = 1.\]
\end{proof}

We will now address the problem of finding the limit of $\nu^{N}_{\rho}$-measures of configurations having particles on two fixed sites with minimal distance $r$.

\begin{prop}\label{cruciallemma}
Consider the RrEP on the torus $\T_N$. Let $\nu_{\rho}^{N}$ be the grand-canonical measure according to \eqref{gcm} with weights given by \eqref{weights}. Then, for all $x \in \T_N$,
 \begin{align}\label{ulli}
\nu_{\rho}^{N} (\{ \eta \in \Omega_N \ : \ \eta(x)=\eta(x+r)=1 \}) \to \frac{\rho^{2}}{1-(r-1) \rho} \quad \text{for } N \to \infty.
 \end{align}
\end{prop}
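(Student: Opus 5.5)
By \eqref{gcm} and \eqref{trivial2}, the quantity in \eqref{ulli} is the expectation
\[
\nu_{\rho}^{N} (\{ \eta(x)=\eta(x+r)=1 \}) = \sum_{K=0}^{M} \alpha_{N,K}(\rho)\, \frac{K}{N} \cdot \frac{K-1}{N-(r-1)K-1}, \qquad M := \lfloor \tfrac{N}{r} \rfloor .
\]
Here the weights $\alpha_{N,K}(\rho)$ are the binomial distribution $\mathrm{Bin}(M,p_N)$ with $p_N := N\rho/M \to r\rho$. So the plan is a law of large numbers argument. Write $K = M\cdot (K/M)$ and note $M/N \to 1/r$. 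Then the summand equals $g_N(K/M)$, where
\[
g_N(s) = \frac{(M/N)s\,((M/N)s - 1/N)}{1-(r-1)(M/N)s - 1/N}.
\]
This converges uniformly on compact subsets of $\{s: (r-1)s/r < 1\}$ to $g(s) = \frac{(s/r)^2}{1-(r-1)s/r}$. Since $K/M$ concentrates at $p_N \to r\rho$ (Chebyshev: the variance of $K/M$ is at most $1/(4M)$), we get $g(r\rho) = \rho^2/(1-(r-1)\rho)$, as claimed.

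The main obstacle is the singularity of the denominator. The denominator $N-(r-1)K-1$ is smallest when $K=M$, where it is of order $N/r$, or more precisely $N - (r-1)\lfloor N/r\rfloor - 1 \geq N/r - 1$. So the summand is bounded uniformly, by something like $\frac{K}{N}\cdot\frac{K}{N/r-1} \leq C$, as long as $N>2r$. Hence there is in fact no genuine blow-up. For $\rho<1/r$, and hence $p_N<1$ eventually, split the sum into $|K/M - p_N| \leq \delta$, where $g_N \to g$ uniformly and $g$ is continuous near $r\rho$, and its complement. The complement has binomial probability $O(1/(M\delta^2))$ and a bounded summand. Letting $N\to\infty$ and then $\delta \to 0$ gives \eqref{ulli}. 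The boundary case $\rho = 1/r$ is degenerate: then $p_N$ may exceed $1$, so the weights are only meaningful for $\rho \leq M/N$, and the limit is $\infty$ formally. I would restrict to $\rho<1/r$.

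Alternatively, the authors likely intend an exact computation: expand $\frac{K-1}{N-(r-1)K-1}$ via the identity of Lemma~\ref{technical}, and use binomial factorial moments $\sum_K \alpha_{N,K}\binom{K}{q} = \binom{M}{q}p_N^q$. This yields a geometric-type series $\sum_q ((r-1)\rho)^q\rho^2 \to \rho^2/(1-(r-1)\rho)$. The concentration argument above avoids that bookkeeping, and I would present it as the main proof, remarking on the series expansion as a consistency check.
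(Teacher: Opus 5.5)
Your argument is correct, and it takes a different route from the paper's.

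\textbf{The paper's route.} The paper never uses concentration. It expands $(1-p_N)^{M-K}$ by the binomial theorem and regroups everything as a power series $\sum_{m\geq 2}\frac{((r-1)\rho)^m}{(r-1)^2}a_N(m)$. It then evaluates the alternating inner sum exactly through the interpolation identity of Lemma~\ref{technical}, which gives $F=1$. Next it shows that the remaining factor $E$ tends to $1$ and is bounded by $c_N^{m-1}$ with $c_N\to 1$. It concludes by dominated convergence over $m$ against a geometric majorant; this step needs $(r-1)\rho<1$ and $r\geq 2$.

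\textbf{Your route.} You write the probability as $\mathbb{E}[g_N(K/M)]$ with $K\sim\mathrm{Bin}(M,p_N)$. You note that $g_N\to g$ uniformly on all of $[0,1]$, because the denominator is at least $1/r-1/N$. The weak law of large numbers then finishes the proof.

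\textbf{Comparison.} Your proof is shorter and treats $r=1$ without dividing by $r-1$. It also uses only the fact that the mixing weights concentrate at $K\approx\rho N$. In that sense it is an equivalence-of-ensembles argument and would work for any other weights with that property. The paper's exact computation yields more information: the coefficient of each $\rho^m$ converges to $(r-1)^{m-2}$. That extra information is not used anywhere later.

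\textbf{A small correction to your side remark.} At $\rho=1/r$ the right-hand side of \eqref{ulli} equals $\frac{1/r^2}{1/r}=1/r$, not $\infty$. For $r\geq 2$ the pole of $\rho^2/(1-(r-1)\rho)$ lies at $\rho=1/(r-1)$, outside the admissible range. Along $N\in r\mathbb{N}$, where $p_N=1$, your argument gives the value $1/r$ directly. Restricting to $\rho<1/r$ is still the sensible convention, because for $r\nmid N$ the weights \eqref{weights} do not form a probability vector at $\rho=1/r$.
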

\begin{proof}
At first, bringing the demanded probability into a convenient form, we get

 \begin{align*}
\begin{array}{lcl} 
& &\nu_{\rho}^{N} (\{ \eta \in \Omega_N \ : \ \eta(x) = \eta(x+r)=1 \})  \\
 &\overset{\eqref{KN}}{=} &\sum\limits_{K=2}^{\lfloor \frac{N}{r} \rfloor} \frac{K}{N} \cdot  \frac{K-1}{N-(r-1)K-1} {\lfloor \frac{N}{r} \rfloor \choose K} \left( \frac{N}{\lfloor \frac{N}{r} \rfloor } \rho \right)^{K} \left( 1-\frac{N}{\lfloor \frac{N}{r} \rfloor } \rho \right)^{\lfloor \frac{N}{r} \rfloor - K} 
\\ &= &\frac{\lfloor \frac{N}{r} \rfloor \cdot (\lfloor \frac{N}{r}\rfloor - 1 )}{N} \sum\limits_{K=2}^{\lfloor \frac{N}{r} \rfloor} {\lfloor \frac{N}{r}\rfloor - 2 \choose K-2}  \left( \frac{N}{\lfloor \frac{N}{r} \rfloor } \rho \right)^{K} \left( 1-\frac{N}{\lfloor \frac{N}{r} \rfloor} \rho \right)^{\lfloor \frac{N}{r} \rfloor - K} \frac{1}{N - (r-1)K-1} \\
   &= &\sum\limits_{K=2}^{\lfloor \frac{N}{r} \rfloor}\frac{\lfloor \frac{N}{r} \rfloor (\lfloor \frac{N}{r} \rfloor - 1)}{N (N-(r-1)K-1)}  {\lfloor \frac{N}{r}\rfloor - 2 \choose K-2} \sum\limits_{s=0}^{\lfloor \frac{N}{r} \rfloor -K} \rho^{K+s} \cdot \left( (\frac{N}{\lfloor \frac{N}{r} \rfloor})^{K+s} (-1)^{s} {\lfloor \frac{N}{r} \rfloor - K \choose s}   \right) \\  &{}\overset{m:=K+s}{=}  &\sum\limits_{m=2}^{\lfloor \frac{N}{r} \rfloor} \frac{((r-1)\rho)^{m}}{(r-1)^{2}} \cdot a_N (m) ,
\end{array}
\end{align*}
where \[ a_N (m) := \frac{1}{(r-1)^{m-2}} \left( \frac{N}{\lfloor \frac{N}{r} \rfloor} \right)^{m} \sum\limits_{K=2}^{m}  {\lfloor \frac{N}{r} \rfloor - K \choose m-K} {\lfloor \frac{N}{r} \rfloor -2 \choose K-2} \frac{(-1)^{m-K} \lfloor \frac{N}{r} \rfloor ( \lfloor \frac{N}{r} \rfloor -1)}{N (N-(r-1)K-1)} \]
Clearly, it holds \[\sum\limits_{m=2}^{\lfloor \frac{N}{r} \rfloor} \frac{((r-1)\rho)^{m}}{(r-1)^{2}}  = \sum\limits_{j=0}^{\lfloor \frac{N}{r} \rfloor -2} \frac{((r-1)\rho)^{j+2}}{(r-1)^{2}}  = \rho^{2} \sum\limits_{j=0}^{\lfloor \frac{N}{r} \rfloor-2} ((r-1)\rho)^{j} \to \frac{\rho^{2}}{1-(r-1) \rho} , \] for $N \to \infty$ as $|(r-1)\rho|<1$.
Additionally, for fixed $m$, we have $\frac{((r-1)\rho)^{m}}{(r-1)^{2}} \cdot a_N (m) \geq 0$. 
Reformulating $a_N (m)$ further gives 
\begin{align*}
\begin{array}{llcl}
&a_N (m) &= &\frac{1}{(r-1)^{m-2}} \left( \frac{N}{\lfloor \frac{N}{r} \rfloor} \right)^{m} \frac{\lfloor \frac{N}{r} \rfloor !}{N (\lfloor \frac{N}{r} \rfloor - m)!} \sum\limits_{K=2}^{m} \frac{(-1)^{m-K}}{(m-K)! (K-2)! (N-(r-1)K-1)} \\ \\ & &\underset{t:=K-2}{\overset{q:= m-2}{=}} &\frac{1}{(r-1)^{q}}   \left(\frac{N}{\lfloor \frac{N}{r} \rfloor} \right)^{q+1} \frac{(\lfloor \frac{N}{r} \rfloor -1)!}{(\lfloor \frac{N}{r}\rfloor - q - 2)!} \sum\limits_{t=0}^{q} \frac{(-1)^{q-t}}{(q-t)! t! (N-2r+1-(r-1)t)} \\ \\
& &= &
\underbrace{{\textstyle \left(\frac{N}{\lfloor \frac{N}{r} \rfloor} \right)^{q+1} \frac{(\lfloor \frac{N}{r} \rfloor -1)!}{(\lfloor \frac{N}{r}\rfloor - q - 2)! \prod\limits_{j=0}^{q} (N - 2r + 1 - (r-1)j)    }}}_{=:E} \\
& & &\cdot \underbrace{{\textstyle \sum\limits_{t=0}^{q} \frac{(-1)^{q-t}}{t! (q-t)!} \frac{1}{(r-1)^{q}}\prod\limits_{j=0, \ j \neq t}^{q}  ((N - 2r + 1) - (r-1)j)       }}_{=:F}.
\end{array}
\end{align*}

On the one hand, it holds that for fixed $m$ (i.e. for fixed $q$) $E \to 1$ for $N \to \infty$, since both $\left( \lfloor \frac{N}{r} \rfloor \right)^{q+1} \prod\limits_{j=0}^{q} (N-2r+1-(r-1)j)$ and $N^{q+1} \frac{(\lfloor \frac{N}{r} \rfloor -1)!}{(\lfloor \frac{N}{r} \rfloor -q -2)!}$  are polynomials in $N \cdot \lfloor \frac{N}{r} \rfloor$ of degree $q+1$ with leading coefficient $1$. On the other hand, there actually holds equality for $F$, i.e. for $0 \leq q \leq \lfloor \frac{N}{r} \rfloor -2 $ we have $F=1$, due to Lemma \ref{technical}, putting $x = \frac{N-2r+1}{r-1}$. 
With $\frac{\left(\lfloor \frac Nr\rfloor-1\right)!}{\left(\lfloor \frac Nr\rfloor-q-2\right)!}=\prod_{j=0}^q\left(\lfloor \frac Nr\rfloor-1-j\right)$, we can rewrite and bound $E$
\begin{align*}
	E&= \prod_{j=0}^q\frac{N}{\lfloor \frac Nr\rfloor}\frac{\lfloor \frac Nr\rfloor-1-j}{N-2r+1-(r-1)j}
	\leq \left(\frac{N}{\lfloor \frac Nr\rfloor}\frac{\lfloor \frac Nr\rfloor-1}{N-2r+1}\right)^{m-1},
\end{align*}
where we used that $\frac{N}{\lfloor \frac Nr\rfloor}\frac{\lfloor \frac Nr\rfloor-1-j}{N-2r+1-(r-1)j}\leq\frac{N}{\lfloor \frac Nr\rfloor}\frac{\lfloor \frac Nr\rfloor-1}{N-2r+1}$ for all $j=0,\dots,q$.
Since $\frac{N}{\lfloor \frac Nr\rfloor}\frac{\lfloor \frac Nr\rfloor-1}{N-2r+1}\to 1$, there exists some $0<\lambda<1$ such that
\begin{align*}
	f_N(m):= \frac{((r-1)\rho)^{m}}{(r-1)^{2}}  \cdot a_N (m) \cdot \id_{\{m \leq \lfloor \frac{N}{r} \rfloor\}}<\frac{\lambda^m}{(r-1)^2},
\end{align*}
for all $N$ large enough.
Therefore, we can apply the dominated convergence theorem with $f(m):=\frac{((r-1)\rho)^{m}}{(r-1)^{2}} $ and $f_N (m) $, giving \[
\lim\limits_{N \to \infty} \int\limits_{\mathbb{N}\backslash \{0,1\}} f_N (m) \diff  \mathcal{C}  = \int\limits_{\mathbb{N}\backslash \{0,1\}} f(m) \diff  \mathcal{C} = \sum\limits_{m=2}^{\infty} \frac{((r-1)\rho)^{m}}{(r-1)^{2}}  = \frac{\rho^{2}}{1-(r-1)\rho} ,\] where $\mathcal{C}$ is the counting measure on $\mathbb{N}\backslash \{0,1\}$.
\end{proof}

Apart from knowing a stationary distribution on $\Omega_N$, we are also interested in defining a stationary distribution $\nu_{\rho}$ on the whole lattice $\mathbb{Z}$, i.e. on the configuration space 
\begin{align}\label{zed} \Omega_{\mathbb{Z}} := \left\{ \eta \in \{0,1\}^{\mathbb{Z}} \ : \ \eta (y) = 1 \ \Rightarrow \ \eta(y \pm k)=0 \ \forall k \in \{ 1,\dots,r-1\}, \forall y \in \mathbb{Z}  \right\} .\end{align}

We will do so by fixing a converging subsequence of $\nu^{N}_{\rho}$. As $\mathbb{Z}$ is countable and $\{0,1\}^{\mathbb{Z}}$ is equipped with the product topology, the latter is compact. Thus the above configuration space, as a closed subset of $\{0,1\}^{\mathbb{Z}}$, is compact as well. 
This means that the sequence $\nu^{N}_{\rho}$ of measures on $\mathcal{B}(\Omega_{\mathbb{Z}})$ is tight, implying by Prohorov's Theorem
that a converging subsequence $\nu^{N_k}_{\rho}$ indeed exists. 
It follows from propositions
I.1.8 and I.2.14 in \cite{Liggett85} that the stationarity property is preserved for $\nu_{\rho}$.

\section{Proof of Theorem \ref{thm:main}}\label{sec:proof}

The proof follows closely the standard method laid out in \cite{Kipnis99}. However, as usual, at each step it has to be checked that 
the relevant estimates hold for the process at hand. This is what we do in this section. 

We start with a calculation that will be fundamental throughout the proof. Fix a smooth $G: \T \to \bbR$, and consider the function 
$\eta \mapsto \langle \pi_\eta^N, G \rangle$ defined in \eqref{summeempirisches1}. We compute the result of applying $N^2 L_N$,
the generator of the process diffusively rescaled process $(\eta_{N^2 t})$, to this function: 

\begin{equation} \label{weiter}
\begin{split}
 & N^2 [L_N \langle \pi_{.}^{N} , G \rangle](\eta) = 
  N  \sum\limits_{x \in \T_N } G( \tfrac{x}{N}) \Big( - \frac12 \eta (x) (1- \eta (x-r)) - \frac12 \eta(x) (1-\eta (x+r))  \\ 
   & \qquad + \frac12 \eta (x-1) (1- \eta (x+r-1))  + \frac12 \eta (x+1) (1 - \eta (x-r+1))  \Big)  \\
 & = \frac{1}{2} \langle \pi_{\eta}^{N} , \Delta_N G \rangle + \frac{N}{2} \sum\limits_{x \in \T_N } \eta (x) \eta (x+r) \cdot \Big( G( \tfrac{x+r}{N}) + G (\tfrac{x}{N}) - G(\tfrac{x+1}{N}) - G(\tfrac{x+r-1}{N}) \Big) ,
\end{split} 
\end{equation}
where 
\begin{align*}
{\Delta}_N G (\tfrac{x}{N}) := N^{2} \left( G(\tfrac{x+1}{N}) + G(\tfrac{x-1}{N}) - 2 G(\tfrac{x}{N}) \right),
\end{align*}
is the discrete Laplacian. Note that the final term in \eqref{weiter} is zero for the SSEP. Summation by 
parts\footnote{the fact that this summation by parts can be carried out means that the model is of gradient type, a property that the higher dimensional models lack.}
now gives
\begin{align}\label{genempi2} \begin{split}
N^{2} [L_N \langle \pi_{.}^{N} , G \rangle](\eta) & = \frac{1}{2} \langle  \pi_{\eta}^{N} ,\Delta_N G \rangle + \frac{1}{2N} \sum\limits_{x \in \T_N } \eta (x) \eta (x+r) \cdot \left( \sum\limits_{j = x+1}^{x+r-1} \Delta_N G (\tfrac{j}{N}) \right) \\ 
& = \frac{1}{2N} \sum\limits_{x \in \T_N} \Delta_N G (\tfrac{x}{N}) \Big( \eta(x) + \eta(x-r+1) \eta(x+1) + \dots \\ 
& \qquad \qquad \qquad \qquad \qquad \dots + \eta (x-1) \eta (x-1+r) \Big)  .
\end{split} \end{align} 
We introduce the function 
\begin{align}\label{h} 
h(\eta) := \eta(0) + \eta(1-r) \eta(1) + \dots + \eta(-1) \eta (r-1), 
\end{align} 
and the shift $\tau_x: \Omega_{N,A} \to \Omega_{N,A}$, with $[\tau_x \eta](y) = \eta(x + y)$. With the convention that 
$\tau_x f (\eta) := f(\tau_x \eta)$ for functions $f$ on $\Omega_{N,A}$, we then have the concise formula 
\begin{equation} 
\label{eq: generator formula}
N^{2} [L_N \langle \pi_{.}^{N} , G \rangle](\eta) = \frac{1}{2N} \sum\limits_{x \in \T_N} \Delta_N G (\tfrac{x}{N}) \tau_x h (\eta).
\end{equation} 

This information, together with our calculations for the stationary measure, 
is already sufficient for guessing the form of the hydrodynamic equation. To see this, consider the fundamental martingale 
\begin{align} \label{martingale} \begin{split} M_{t} 
&= \langle \pi_{N^{2}t}^{N} , G \rangle - \langle \pi_0^{N} , G \rangle - \int\limits_0^{N^{2}t}  L_N \langle \pi_{s}^{N} , G \rangle \diff s \\ &
= \langle \pi_{N^{2}t}^{N} , G \rangle - \langle \pi_0^{N} , G \rangle -  \int\limits_0^{t} N^{2}  L_N \langle \pi_{N^{2} s}^{N} , G \rangle   \diff s . \end{split} \end{align}
of the real-valued process $(\langle \pi_{N^{2}t}^{N} , G \rangle)_{t \geq 0}$. We have just seen that 
\begin{equation} \label{eq:M_t explicit}
 M_{t} = \langle \pi_{N^{2}t}^{N} , G \rangle - \langle \pi_0^{N} , G \rangle -  \int\limits_0^{t} \frac{1}{2N} \sum\limits_{x \in \T_N} \Delta_N G (\tfrac{x}{N}) \tau_x h (\eta_{N^2 s})  \diff s. 
\end{equation}
We now pretend that we already know  the empirical measures to converge to a non-random measure $\rho(t,x) \dd x$, and  our aim is to determine 
$\rho$. Since the limit is not random, we can as well take expectations on both sides, and so the left hand side vanishes as it is a martingale 
starting in $0$. Rearranging and taking the limit $N \to \infty$ gives us 
\begin{equation} \label{eq:integrated}
\int_\T G(x) \rho(t,x) \, \dd x - \int_\T G(x) \rho(0,x) \, \dd x = \lim_{N \to \infty}
\int\limits_0^{t} \frac{1}{2N} \sum\limits_{x \in \T_N} \Delta_N G (\tfrac{x}{N}) \bbE(\tau_x h (\eta_{N^2 s}))  \diff s
\end{equation}
The sum over $x$ converges to a Riemann integral, the discrete Laplacian to the continuous Laplacian, and it remains to compute 
\begin{equation} \label{eq:expectation}
\begin{split} 
& \bbE(\tau_x h (\eta_{N^2 s})) = \E(\eta_{N^2 s}(x)) + \bbE(\eta_{N^2 s}(x-r+1) \eta_{N^2 s}(x+1)) + \ldots \\ 
& \qquad  + \bbE(\eta_{N^2 s}(x-1) \eta_{N^2 s}(x-1+r)).
\end{split}
\end{equation}
At this point, we make the heuristic {\em local equilibrium Ansatz}: we assume that if we know that the empirical density converges to 
$\rho > 0$ for some $x \in \bbT$, then in a small interval around $x$, the process $\eta_{N^2 s}(x)$ has, already for finite, large $N$, 
a distribution that is very close to the grand-canonical distribution $\nu_\rho^N$ that we derived in the previous section. This means 
$\bbE(\eta_{N^2 s}(x)) \to \rho(s,x)$ as $N \to \infty$ on the right hand side of \eqref{eq:expectation}, and that all the $(r-1)$ remaining 
terms on the right hand side are approximately equal to 
\[
\bbE_{\nu_{\rho(s,x)}^N}(\eta(x-r+1) \eta(x+1)) = \bbP_{\nu_{\rho(s,x)}^N}(\eta(x-r)=1, \eta(x)=1) \to \frac{\rho(s,x)^2}{1 - (r-1) \rho(s,x)}
\]
as $N \to \infty$, due to Proposition \ref{cruciallemma}. In total, for $r> 1$ the right hand side thus equals 
\[
\rho(s,x) + \frac{(r-1) \rho(s,x)^2}{1 - (r-1) \rho(s,x)} =  \frac{\rho(s,x)}{1 - (r-1) \rho(s,x)} = \frac{1}{r-1} 
\Big( \frac{1}{1 - (r-1) \rho(s,x)} - 1 \Big).
\]
We use this in \eqref{eq:integrated}, take the limit $N \to \infty$, and perform two further integrations by parts to get 
\[
\int_\T G(x) \rho(t,x) \, \dd x - \int_\T G(x) \rho(0,x) \, \dd x = 
\frac12\int\limits_0^{t} \dd s \int_\T  \dd x G (x) [\Delta \Phi(\rho)](x,s),
\]
with $\Phi(y) = \frac1{(r-1)}\frac{1}{1 - (r-1) y}$. Thus $\rho$ fulfills the PDE \eqref{PDE}.

We now have to close the gaps left by this heuristic derivation. We start with an estimate that will be useful several times below. 
Consider a smooth function 
$G:[0,T] \times \T \to \bbR$, and define the process
\begin{align} \label{uber} 
M_{t}^{G,N} 
 =  \langle \pi_{N^{2} t}^N , G ( t, \cdot) \rangle - \langle \pi_0^N , G(0,\cdot) \rangle - \int\limits_{0}^{t} (\partial_s + N^{2} L_{N} ) \langle \pi^N_{N^{2} s} , G (s,\cdot) \rangle \diff s,
\end{align}
which is a slight extension of the process defined in \eqref{martingale}, and in fact is again a martingale, namely the 
fundamental martingale of the process 
$(\langle \pi_{N^{2}t}^{N} , G \rangle)_{t \geq 0}$, only this time with a time-dependent $G$. Our first result is that this martingale
becomes constant as $N \to \infty$ in the following strong sense.

\begin{prop} \label{prop:martingales}
Let $\tau$ be a stopping time with respect to the natural filtration of $M_t^{G,N}$, and $\theta > 0$. Then 
\[
\lim_{N \to \infty} \bbE\Big( \big(M_{\tau + \theta}^{G,N} - M_{\tau}^{G,N}\big)^2 \Big)  = 0. 
\]
\end{prop}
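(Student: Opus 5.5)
We will use the standard quadratic variation identity for Dynkin martingales. For a Markov process with generator $N^2 L_N$ and a (time-dependent) bounded function $F(s,\eta)=\langle \pi^N_\eta, G(s,\cdot)\rangle$, the process
\[
(M_t^{G,N})^2 - \int_0^t \Big( N^2 L_N F(s,\cdot)^2 - 2F(s,\cdot)\, N^2 L_N F(s,\cdot)\Big)(\eta_{N^2 s})\,\dd s
\]
is again a martingale. This is the computation in \cite[Appendix 1]{Kipnis99}; the time dependence of $G$ does not change it, since $\partial_s$ acts as a derivation and cancels in the carré du champ. Since $M^{G,N}$ is square integrable and $\tau$ is a stopping time, optional stopping applied at the bounded stopping times $\tau\wedge T'$ and $(\tau+\theta)\wedge T'$ will give
\[
\bbE\Big(\big(M_{\tau+\theta}^{G,N}-M_\tau^{G,N}\big)^2\Big) = \bbE \int_\tau^{\tau+\theta} \Gamma_N(s,\eta_{N^2 s})\,\dd s ,
\]
where $\Gamma_N = N^2 L_N F^2 - 2 F N^2 L_N F$. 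We then let $T'\to\infty$ using the deterministic bound on $\Gamma_N$ below. If the stopping time is unbounded, we restrict to $\tau \le T$, which is all that is needed for the tightness argument later.

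The main step is to compute $\Gamma_N$ explicitly and bound it uniformly in $\eta$. For a jump generator,
\[
\Gamma_N(s,\eta) = \frac{N^2}{2}\sum_{|x-y|=1} w(\eta,x,y)\big(F(s,\eta^{x,y})-F(s,\eta)\big)^2 .
\]
A jump of one particle from $x$ to $y=x\pm1$ changes $F$ by $N^{-1}\big(G(s,\tfrac yN)-G(s,\tfrac xN)\big)$, which is at most $N^{-2}\|\partial_u G\|_\infty$ in absolute value. There are at most $2N$ terms in the sum, and the rates satisfy $w\le 1$. Hence
\[
\Gamma_N \le \frac{N^2}{2}\cdot 2N\cdot N^{-4}\|\partial_u G\|_\infty^2 = \frac{\|\partial_u G\|_\infty^2}{N}.
\]
The range-$r$ constraint only reduces the rates, so no model-specific estimate enters here.

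Integrating over an interval of length $\theta$ then yields
\[
\bbE\Big(\big(M_{\tau+\theta}^{G,N}-M_\tau^{G,N}\big)^2\Big) \le \frac{\theta\,\sup_{s}\|\partial_u G(s,\cdot)\|_\infty^2}{N} \to 0 .
\]
The only point requiring care, and so the main potential obstacle, is justifying optional stopping. This needs square integrability of $M^{G,N}$ and of the compensated square process, and both follow because every quantity is bounded for fixed $N$: $|F|\le \|G\|_\infty$, and $N^2 L_N F$ is bounded by \eqref{eq: generator formula}. We also need to verify that the algebraic identity for $\Gamma_N$ holds with time-dependent $G$, which is a short direct computation of $(\partial_s+N^2L_N)F^2 - 2F(\partial_s+N^2L_N)F$.
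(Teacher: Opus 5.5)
Your proposal is correct and follows essentially the same route as the paper. Both arguments use the quadratic-variation martingale from Kipnis--Landim (Appendix 1, Lemma 5.1), identify the compensator integrand as $\frac12\sum_{|x-y|=1} w\,\big(G(s,\tfrac{y}{N})-G(s,\tfrac{x}{N})\big)^2$, and bound it by $O(N^{-1})$ using the smoothness of $G$ and $w\leq 1$. Your version differs only in two minor respects: the bound is deterministic, giving $\theta/N$ rather than $T/N$, and you state the optional-stopping justification, which the paper leaves implicit.
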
 

\begin{proof}
Because $(M_t^{G,N})$ is a martingale, we have that 
\[
\bbE\Big( \big(M_{\tau + \theta}^{G,N} - M_{\tau}^{G,N}\big)^2 \Big) = \bbE\Big( \big(M_{\tau + \theta}^{G,N}\big)^2 \Big) -
\bbE \Big( \big(M_{\tau}^{G,N}\big)^2 \Big).
\]
In Lemma 5.1 from Appendix 1 of \cite{Kipnis99} it is proved (in a more general situation) that the process  
\begin{align}\label{Ntg} 
Q_{t}^{G,N} = 
(M_{t}^{G,N})^{2} - N^2 \int\limits_{0}^{t} L_N \langle \pi_{N^{2} s}^{N} , G(s,.) \rangle^{2} - 2  \langle \pi^{N}_{N^{2} s} , G(s,.) \rangle L_N \langle \pi^{N}_{N^{2} s} , G(s,.) \rangle \diff s
\end{align} 
also is a martingale. Therefore,
\[
\bbE\Big( \big(M_{\tau + \theta}^{G,N} - M_{\tau}^{G,N}\big)^2 \Big) = \bbE \Big( 
\int\limits_{\tau}^{\tau + \theta}  B_{N^2 s}^G \diff s \Big) \leq \int_0^T \bbE(|B_{N^2 s}^G|)  \dd s,
\]
with 
\[
B^G_{N^2 s} = N^2 \big(  L_N \langle \pi_{N^{2} s}^{N} , G(s,.) \rangle^{2} - 2  \langle \pi^{N}_{N^{2} s} , G(s,.) \rangle L_N \langle \pi^{N}_{N^{2} s} , G(s,.) \rangle \big).
\]
In order to compute $B_{N^{2}s}^{G}$, let us first notice that the $s$-dependence of $G$ plays no role for applying the generator, 
so we will drop it from the notation. For sites $x,y \in \T_N, \ |x-y|=1,$ with $\eta_{N^{2}s}(x) = 1$ and such that a jump to $y \in \T_N$ is allowed under the dynamics, we have 
\begin{align}
\label{hilfsding}
\begin{split}
 &\left( \sum\limits_{z \in \T_N} \eta_{N^{2} s}^{x,y} (z)  G(\tfrac{z}{N}) \right)^{2} - \left( \sum\limits_{z \in \T_N} \eta_{N^{2} s} (z) G(\tfrac{z}{N}) \right)^{2}  \\ = &\left( \sum\limits_{z \in \T_N} (\eta_{N^{2}s}^{x,y} (z) + \eta_{N^{2}s}(z)) G(\tfrac{z}{N}) \right) \cdot \left( \sum\limits_{z \in \T_N} (\eta^{x,y}_{N^{2}s} (z) - \eta_{N^{2}s} (z)) G(\tfrac{z}{N}) \right)  \\ = & \left(\left(2 \sum\limits_{z \in \T_N} \eta_{N^{2}s} (z) G(\tfrac{z}{N})\right) +  G(\tfrac{y}{N}) -G(\tfrac{x}{N})\right) \cdot \left( G(\tfrac{y}{N}) - G(\tfrac{x}{N}) \right)  .
\end{split}
\end{align}
We can use this equation to derive
\begin{align} 
\label{affect}
\begin{split}
&N^{2} L_N \left( \langle \pi_{N^{2} s}^{N},G\rangle^{2} \right) \\ = &\frac12 \sum\limits_{\substack{x,y \in \T_N, \\ |x-y|=1}} \eta_{N^{2} s} (x) \left( \prod\limits_{k=0}^{r-1} (1-\eta_{N^{2} s}(y+(y-x)k)) \right) \cdot \\ & \qquad \qquad \cdot \left( \left( \sum\limits_{z \in \T_N} \eta_{N^{2} s}^{x,y} (z)  G(\tfrac{z}{N}) \right)^{2} - \left( \sum\limits_{z \in \T_N} \eta_{N^{2} s} (z) G(\tfrac{z}{N}) \right)^{2} \right) \\ 
= &\frac12 \sum\limits_{\substack{x,y \in \T_N, \\ |x-y|=1}} \eta_{N^{2} s} (x) \left( \prod\limits_{k=0}^{r-1} (1-\eta_{N^{2} s}(y+(y-x)k)) \right) \left( G(\tfrac{y}{N}) - G(\tfrac{x}{N})\right)^{2} \\ & + 2 N^2 \langle \pi_{N^{2} s}^{N} , G \rangle L_N \langle \pi_{N^{2} s}^{N} , G \rangle ,
\end{split}
\end{align}
from which we can conclude 
\begin{align}
\label{B_konkret}
B_{N^{2}s}^{G} = \frac12 \sum\limits_{\substack{x,y \in \T_N, \\ |x-y|=1}} \eta_{N^{2} s} (x) \left( \prod\limits_{k=0}^{r-1} (1-\eta_{N^{2} s}(y+(y-x)k)) \right) \left( G(\tfrac{y}{N}) - G(\tfrac{x}{N})\right)^{2} .
\end{align}
Thus, because $G$ is smooth, and because $\eta_s(x) \in \{0,1\}$, we have 
\[
\bbE (|B_{N^{2}s}^{G}|)  \leq \frac12 \sum\limits_{\substack{x,y \in \T_N, \\ |x-y|=1}} \left( G(s,\tfrac{y}{N}) - G(s,\tfrac{x}{N})\right)^{2} 
= O(N^{-1}).
\]
This concludes the proof.
\end{proof}

The first proper step of proving hydrodynamic limits is to show relative compactness. For conceptual clarity, 
we introduce the maps 
\[
Y_N: D([0,T], \Omega_{N,r}) \to D([0,T], \caM_+(\T)), \qquad (\eta_t^N)_{t \leq T} \mapsto (\pi_{\eta_t}^N)_{t \leq T}.
\]
On $D([0,T], \Omega_{N,r})$, we consider the distribution of the (diffusively rescaled) 
Markov process with generator \eqref{gen} and initial distribution 
$(\eta_0^N)$ so that the sequence $(\pi^N_{\eta_0})_{N \in \bbN}$ is associated to some profile $\rho_0$. Thus, each $Y_N$
becomes a random variable. Its distribution is thus a probability measure on $D([0,T], \caM_+(\T))$. Recall that we equip 
$\caM_+(\T)$ with the (metrization of the) weak topology, and $D([0,T], \caM_+(\T))$ with the Skorohod topology, so we know what 
continuous functions on $D([0,T], \caM_+(\T))$ are. In particular, we have a notion of weak convergence of measures on $D([0,T], \caM_+(\T))$. 
We now have 

\begin{prop} \label{prop: relative compactness}
The sequence of distributions of $(Y_N)_{N \in \bbN}$ is relatively compact in the weak topology. 
\end{prop}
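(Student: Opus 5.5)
We follow the standard route of \cite{Kipnis99}, Chapter 4. The first step is to reduce relative compactness in $D([0,T],\caM_+(\T))$ to relative compactness of the real-valued processes $(\langle \pi^N_{N^2 t}, G\rangle)_{t\le T}$ for $G$ in a dense countable family of smooth functions; we take the family $(G_k)$ entering the metric \eqref{eq:metric}, chosen smooth. This reduction (Proposition 1.7 in Chapter 4 of \cite{Kipnis99}) applies because the total mass is bounded: $\langle \pi^N_\eta, 1\rangle \le 1$ for every $\eta$. Hence the measures live in the weakly compact set $\{\mu\in\caM_+(\T): \mu(\T)\le 1\}$, and compact containment holds trivially.

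For each fixed smooth $G$ we then verify Aldous' criterion. The first part is a uniform bound,
\[
\sup_N \bbP\big(\sup_{t\le T}|\langle \pi^N_{N^2t},G\rangle|>A\big)\to 0 \quad \text{as } A\to\infty,
\]
and this is immediate, since $|\langle\pi^N_\eta,G\rangle|\le\|G\|_\infty$. The second part asks that for every $\eps>0$
\[
\lim_{\gamma\to0}\limsup_{N}\sup_{\tau,\ \theta\le\gamma}\bbP\big(|\langle \pi^N_{N^2(\tau+\theta)},G\rangle-\langle \pi^N_{N^2\tau},G\rangle|>\eps\big)=0,
\]
where the supremum runs over stopping times $\tau\le T$. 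To handle it, we write the increment using the martingale \eqref{martingale} as
\[
M^{G,N}_{\tau+\theta}-M^{G,N}_\tau+\int_\tau^{\tau+\theta}N^2L_N\langle\pi^N_{N^2s},G\rangle\,\dd s .
\]
We treat the two pieces separately. The martingale increment goes to zero in $L^2$, uniformly in $\tau$, by Proposition \ref{prop:martingales}, and Chebyshev then gives the probability bound. For the integral term we use the formula \eqref{eq: generator formula}. Since $0\le \tau_x h\le r$ (the function $h$ in \eqref{h} is a sum of $r$ indicator terms) and $\|\Delta_N G\|_\infty\le \|G''\|_\infty$, we obtain the deterministic bound
\[
\Big|N^2L_N\langle\pi^N_\eta,G\rangle\Big|\le \frac{1}{2N}\sum_{x\in\T_N}\|G''\|_\infty\, r=\tfrac r2\|G''\|_\infty ,
\]
so the integral is at most $\gamma\, r\|G''\|_\infty/2$, which vanishes as $\gamma\to0$.

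The one point needing care is the uniformity in $\tau$ in Proposition \ref{prop:martingales}. Its proof actually yields the bound $\bbE\big(\int_\tau^{\tau+\theta}B^G_{N^2s}\,\dd s\big)\le \theta\, C\|G'\|_\infty^2 N^{-1}$, which is independent of $\tau$. We must also check that $\tau+\theta$ may exceed $T$; the standard remedy is to stop at $T$ or to consider the process on $[0,T+1]$. Both steps of Aldous' criterion are then satisfied, so each sequence of real processes is tight, and tightness of $(Y_N)$ follows from the reduction step. By Prohorov's theorem, tightness is equivalent to relative compactness here, since the space is Polish. I do not expect a genuine obstacle: the gradient structure \eqref{eq: generator formula} gives a bounded drift, which is exactly what is needed.
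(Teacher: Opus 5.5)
Your proposal is correct and follows essentially the same route as the paper: the reduction via Proposition 1.7 of Chapter 4 in \cite{Kipnis99}, then Aldous' criterion, with the increment split into a martingale part (handled by Proposition \ref{prop:martingales} and Chebyshev) and a drift part bounded deterministically through \eqref{eq: generator formula}. Your bound $0\le\tau_x h\le r$ is cruder than the paper's $|h|\le 1$ (on $\Omega_{N,r}$ at most one term of $h$ can be nonzero), but it suffices. Your remarks on uniformity in $\tau$ and on $\tau+\theta>T$ make explicit points that the paper leaves implicit.
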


\begin{proof}
We can test weak convergence with a restricted set of continuous test functions. Namely, a sequence $(\pi^N_t)$ 
of probability measures on $D([0,T], \caM_+(\T))$ is relatively compact if and only if, for all smooth $G: \T \to \bbR$, the sequence of 
image measures under the map $(\pi_t^N)_{t \leq T} \mapsto (\langle \pi_t^N, G \rangle)_{t \leq T}$ is relatively compact as probability measures on 
$D([0,T], \bbR)$. The proof uses the fact that the topology on $\caM_+(\T)$ can be metrized through formula \eqref{eq:metric}, and can be found 
as Proposition 1.7 in Chapter 4 of \cite{Kipnis99}. We thus fix a smooth $G:\bbT \to \bbR$ and consider the sequence of real-valued stochastic processes
$(\langle \pi_{\eta_{N^2 t}}^N, G \rangle)_{t \leq T}$. 

For such processes, we have Prohorov's criterion, which states that the sequence of distributions of 
real-valued processes $(Z_t^N)_{t \leq T, N \in \bbN}$ is tight if we can check two conditions. The first is simply that 
the sequence of marginal distributions under 
$(Z_t^N)_{t \in [0,T]} \mapsto Z_s$ is tight, which in our case is immediate since all the distributions of 
$\eta \mapsto \langle \pi_{\eta_{N^2 s}}^N, G \rangle$ have joint compact support in $[- \| G\|_\infty, \| G \|_\infty]$. 
The second criterion requires to control the modulus of continuity of the processes in the limit $N \to \infty$, and, following 
\cite{Kipnis99}, we use a criterion of Aldous \cite{Aldous78}: it is enough to check that 
\begin{align}
\lim\limits_{\gamma \to 0} \limsup\limits_{N \to \infty} \sup\limits_{\tau \in \mathcal{T}_{T},  \theta \leq \gamma} \bbP \left( | \langle \pi^{N}_{N^{2}(\tau + \theta)} , G \rangle - \langle \pi^{N}_{N^{2} \tau} , G \rangle | > \varepsilon \right) = 0 \qquad \forall \varepsilon > 0  , \label{23} 
\end{align}
where $\mathcal{T}_T$ is the set of all stopping times (with respect to the natural filtration of $(\langle \pi_t^N, G \rangle)_{t \leq T}$) bounded by $T$. 

Using equation \eqref{martingale}, we obtain 
\begin{align}\label{split}\begin{split}
&\bbP \left( | \langle \pi^{N}_{N^{2}(\tau + \theta)} , G \rangle - \langle \pi^{N}_{N^{2} \tau} , G \rangle | > \varepsilon \right) \\ 
\leq & \bbP \left( | M^{G,N}_{N^{2} (\tau + \theta)} - M^{G,N}_{N^{2} \tau} | > \frac{\varepsilon}{2} \right) + 
\bbP \left( \left| \int\limits_{\tau}^{\tau + \theta} N^{2} L_N \langle \pi^{N}_{N^{2}s} , G \rangle \diff s \right| > \frac{\varepsilon}{2} \right)
\end{split}\end{align}
Now we can use \eqref{eq: generator formula} and the fact that $|h(\eta)| \leq 1$ for $\eta \in \Omega_{N,r}$, and obtain
\[
\begin{split} 
 \Big| \int\limits_{\tau}^{\tau + \theta} N^{2} L_N \langle \pi^{N}_{N^{2}s} , G \rangle \diff s \Big| & \leq \sup_{t \in [0,T-\theta]}
 \int_t^{t + \theta} \Big| \frac{1}{2N} \sum\limits_{x \in \T_N} \Delta_N G (\tfrac{x}{N})  \tau_x h (\eta_{N^{2}s})  \Big|\, \dd s \\ 
 & \leq \theta  \frac{1}{2N} \sum\limits_{x \in \T_N}\Big| \Delta_N G (\tfrac{x}{N})\Big|.
 \end{split}
\]
The final sum converges to $\frac12 \int_\T \left|\partial^2_x G(x)\right| \, \dd x$, and since $\theta \leq \gamma$, the second term on the right hand side of 
\eqref{split} vanishes in the limit. 

For the first term, we can use Proposition \ref{prop:martingales} (for the case of $G$ constant in time) to obtain 
\[
\lim_{N \to \infty} \bbE \Big( \big| M^{G,N}_{N^{2} (\tau + \theta)} - M^{G,N}_{N^{2} \tau} \big|^2 \Big) = 0.
\] 
Then an application of Chebyshev's inequality finishes the proof.
\end{proof}

The next step is to show that for each convergent subsequence $(Y_{N_k})_{k \in \bbN}$, the limit has to be of the form 
$(\rho(t,x) \dd x)_{t \leq T}$, where $\rho$ is a weak solution of the PDE \eqref{PDE}. We suppress the 
subsequence notation and assume that $(Y_N)$ itself converges. Let us write $(\pi_t)_{t \in [0,T]}$ for the limiting element 
of $D([0,T], \caM_+)$. We have 

\begin{lem} \label{lem:regular}
For all $t \in [0,T]$, $\pi_t$ is almost surely absolutely continuous with respect to Lebesgue measure on $\T$. 
\end{lem}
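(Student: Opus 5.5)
The plan is the standard argument from Chapter 4 of \cite{Kipnis99}. Since $\eta^N(x)\in\{0,1\}$, the empirical measure has at most one atom of mass $N^{-1}$ per lattice point. This bound survives the limit and forces absolute continuity.

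Fix $t\in[0,T]$ and a continuous $G\geq 0$ on $\T$. For every $N$ and every configuration we have the deterministic bound
\[
0\leq \langle \pi^N_{N^2 t}, G\rangle = \frac1N\sum_{x\in\T_N} G(\tfrac xN)\,\eta_{N^2t}(x) \leq \frac1N\sum_{x\in\T_N} G(\tfrac xN).
\]
The right-hand side converges to $\int_\T G(u)\,\dd u$ as a Riemann sum. This step is sharper than needed: one could even use the factor $1/r$ from the exclusion rule, but plain occupation $\leq 1$ suffices.

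Next, I pass this bound to the limit. The map $\pi\mapsto \sup_{s\in[0,T]}\bigl(\langle\pi_s,G\rangle - \int_\T G\,\dd u\bigr)^+$ on $D([0,T],\caM_+(\T))$ is continuous in the Skorohod topology for continuous $G$, because the sup over time is invariant under time changes and $\pi\mapsto\langle \pi,G\rangle$ is weakly continuous. Its value under $Y_N$ is at most $\frac1N\sum_x G(x/N)-\int G\,\dd u\to0$. By the Portmanteau theorem, the limiting law is therefore concentrated on trajectories satisfying $\langle\pi_s,G\rangle\leq\int G\,\dd u$ for all $s$. Alternatively, one can first handle continuity points $t$ of the limit and then extend to all $t$ by right continuity. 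Applying this to a countable dense family $(G_k)$ of nonnegative continuous functions gives a single almost sure event on which $\langle \pi_t, G_k\rangle\leq \int G_k\,\dd u$ for all $k$ and all $t$.

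Finally, I conclude absolute continuity. By density the inequality extends to all nonnegative continuous $G$. By monotone class or regularity of Borel measures on $\T$, it then extends to all Borel sets: $\pi_t(B)\leq |B|$. Hence $\pi_t$ is absolutely continuous with density bounded by $1$.

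The only delicate point is the continuity of the functional in the Skorohod topology, that is, the fact that fixed-time evaluation is not continuous. Taking the supremum over $s\in[0,T]$ avoids this issue. It is the step I would check most carefully, and the fallback is to use $\int_0^T$ of the positive part instead, which is clearly continuous, and then use right continuity in $t$.
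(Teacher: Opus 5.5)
Your proposal is correct and is essentially the paper's argument: the occupation bound $|\langle \pi^N_{N^2t},G\rangle|\le \frac1N\sum_x|G(\tfrac xN)|$, Skorohod-continuity of the time-supremum functional, and the Portmanteau theorem applied to the resulting closed set of trajectories. Your version is in fact more careful than the paper's in two places. You account for the $O(1/N)$ gap between the Riemann sum and $\int_\T G\,\dd u$, whereas the paper asserts that $(\pi^N_{N^2t})_{t\le T}$ lies in its closed set $B$ exactly. You also spell out the final step, via a countable dense family of test functions and regularity of Borel measures, from these bounds to $\pi_t(E)\le|E|$ for all Borel $E\subset\T$, which the paper leaves implicit.
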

\begin{proof}
Let $G:\T \to \bbR$ be smooth. The map 
\[
D([0,T], \caM_+) \to \bbR, (\nu_t)_{t \in [0,T]} \mapsto \sup\limits_{t \in [0,T]} | \langle \nu_t , G \rangle | 
\]
is continuous with respect to the Skorohod-topology, and thus the set  
\[
B = \Big \{ (\nu_t)_{t \in [0,T]} \in D([0,T],\mathcal{M}_{+}) \ : \ \sup\limits_{t \in [0,T]} | \langle \pi_t , G \rangle | \leq \int\limits_{\T} | G(u) | \diff u \Big\}
\] 
is closed.

As there is at most one particle per site, $(\pi^N_{N^2 t})_{t \leq T} \in B$ almost surely, for all $N$. 
By the Portmanteau-Theorem for closed sets,
\begin{align*}
1 = \limsup\limits_{N \to \infty} \bbP \Big( \sup\limits_{t \in [0,T]} | \langle \pi_{N^2 t}^N , G \rangle | \leq \int\limits_{\T} | G(u) | \diff u 
\Big) 
 \leq \bbP \Big( \sup\limits_{t \in [0,T]} | \langle \pi_t , G \rangle | \leq \int\limits_{\T} | G(u) | \diff u \Big).
\end{align*}
\end{proof}
The previous result allows us to write $\pi_t(\dd x) = \rho(t,x) \dd x$ for all $t \in [0,T]$. Next we show that, almost surely, 
$\rho(t,x)$ solves the PDE \eqref{PDE}. First of all, the assumption that $(\pi_{\eta_0^N})$ is associated to the profile 
$\rho_0$ guarantees that $\rho(0,x) = \rho_0(x)$. Then, we recall the martingale $M^{G,N}$ from \eqref{uber} and note that by first applying 
Chebyshev's inequality, then using Doob's $L^2$-inequality, and then applying Proposition \ref{prop:martingales}, 
we obtain 
\[
\lim_{N \to \infty} \bbP(\sup_{t \in [0,T]} |M^{G,N}_{N^2 t}| > \eps ) = 0
\]
for all $\eps > 0$. Inserting the expression \eqref{eq:M_t explicit}, and adjusting for the fact that $G$ depends on time, we thus find that 
\begin{align} \label{toclose}
\begin{split}
\lim\limits_{n \to \infty} \bbP  \Big( & \sup\limits_{t \in [0,T]} \Big| \langle \pi^{{N_{k}}}_{N_{k}^{2}t} , G \rangle - \langle \pi^{{N_{k}}}_0 , G \rangle - \int\limits_{0}^{t} \langle \pi^N_{N^2 s}, \partial_s G \rangle \diff s  \\ &- \frac12  \int\limits_{0}^{t} \frac{1}{N} \sum\limits_{x \in \T_{N}}  \Delta_N G (s,\tfrac{x}{N}) \tau_x h (\eta^N_{N^{2}s} ) \diff s   \Big| > \delta \Big) = 0. 
\end{split}
\end{align}

At this point we need the replacement lemma due to Guo, Papanicolaou and Varadhan (Lemma 3.3 in \cite{Guo88}). 
We introduce the local empirical density of range $2l + 1$ centred at x
\[
\eta^{l} (x) = \frac{1}{2l + 1} \sum\limits_{|y-x| \leq l} \eta (y)  .
\]
The replacement lemma requires the assumption that for each $N$, 
there exists a stationary distribution $\nu^N$ of $(\eta^N_t)$, such that when the distribution of the sequence $(\eta_0^N)$ of initial configurations is denoted by $(\mu^N)$, then the relative entropy inequality 
\[
H(\mu^N | \nu^N) \leq C_0 N
\]
holds for some $C_0 > 0$ and all $N$. This inequality is actually fulfilled for all sequences $(\mu^N)$ in our model: using the formula 
\[
\begin{split}
H(\mu^N | \nu^N) & = \sum_{\eta \in \Omega_{N,r}} \log\Big(\frac{\mu^N(\{\eta\})}{\nu^N(\{\eta\})} \Big) \mu^N(\{\eta\}) \leq  \\
& \leq \sum_{\eta \in \Omega_{N,r}} | \log \nu^N(\{\eta\}) | \mu^N(\{\eta\}) \leq \max_{\eta \in \Omega_{N,r}} |\log \nu^N(\{\eta\})|, 
\end{split}
\]
all we need to check is that $\nu^N(\eta) \geq \e{- C_0 N}$ uniformly in $\eta \in \Omega^{N,r}$ and $N \in \bbN$. It is easy to check that 
$\nu_\rho^N$ from \eqref{gcm} has this property for all $\rho > 0$.   

The statement of the replacement lemma then is that for every $\delta > 0$, and for all sequences $\eta_0^N$ of 
initial configurations,
\begin{align*}
\limsup\limits_{\varepsilon \to 0} \limsup\limits_{N \to \infty} \PP_{\mu^{N}} \Big( \int\limits_{0}^{t} \frac{1}{N} \sum\limits_{x \in \T_N} \tau_x V_{\varepsilon N} (\eta_s) \ ds \geq \delta \Big) = 0  ,
\end{align*}
where
\[
V_l (\eta) = \Big| \Big( \frac{1}{2l + 1} \sum\limits_{|y| \leq l} \tau_y h( \eta ) \Big) - \Phi(\eta^{l} (0)) \Big|, \quad 
\text{with} \quad \Phi(\rho) = \frac{\rho}{1 - (r-1) \rho}.
\]

We use this in the following way: first, we replace the discrete Laplacian in equation \eqref{toclose} with the actual Laplacian. Since $G$ is 
smooth, this only introduces an error of order $1/N$. With this, we obtain 
\begin{align*}
& \frac{1}{2 N} \sum\limits_{x \in \T_{N}}  \Delta_{N} G (s,\tfrac{x}{N}) \tau_x h (\eta_{N^{2}s} ) \\
& \quad = \frac{1}{2N} \sum\limits_{x \in \T_{N}} \tfrac{1}{2 \varepsilon N + 1} \sum\limits_{|y-x| \leq \varepsilon N} \partial_{x}^{2}  G (s,\tfrac{y}{N}) \tau_y h(\eta_{N^{2} s}) \ + \ \mathcal{O} (\frac{1}{N}) \\
& \quad = \frac{1}{2 N} \sum\limits_{x \in \T_{N}} \partial_{x}^{2}  G(s,\tfrac{x}{N}) \tfrac{1}{2 \varepsilon N + 1} \sum\limits_{|y-x| \leq \varepsilon N} \tau_y h(\eta_{N^{2} s} ) \ + \mathcal{O} (\varepsilon) + \mathcal{O} (\tfrac{1}{N}) 
\end{align*}
Defining 
\[ \iota_{\varepsilon} (u) := \frac{1}{2 \varepsilon} \id_{\{|u| \leq \varepsilon\}}  ,
\]
one gets approximately (for large N) \begin{align*}
\eta_{N^{2} s}^{\varepsilon N} (0) &= \frac{1}{2 \varepsilon N + 1} \sum\limits_{|y| \leq \varepsilon N} \eta_{N^{2} s} (y) \\ &\sim \frac{1}{2 \varepsilon N} \sum\limits_{|y| \leq \varepsilon N} \eta_{N^{2} s} (y) = \frac{1}{N} \sum\limits_{x \in \T_N} \eta_{N^{2} s} (x) \cdot \id_{\{|\frac{x}{N}| \leq \varepsilon\}} \cdot \frac{1}{2 \varepsilon} =  \langle \pi_{N^{2} s}^{N} , \iota_{\varepsilon} \rangle  .
\end{align*}

Hence, applying the replacement lemma, replacing the local empirical density by a function of the empirical measure and lightening the notation by putting $a=\langle \pi_{N^{2} s}^{N} , \iota_{\varepsilon} \rangle $ for the parameter of the grand-canonical measure, the integral in  \eqref{toclose} is equal to
\begin{align*}
\frac{1}{2N_{k}} \sum\limits_{x \in \T_{N_{k}}} \partial_{x}^{2}  G (s,\frac{x}{N_k}) \tau_x \Phi (a) \ + \ R_{N, \varepsilon}  ,
\end{align*}
where $R_{N, \varepsilon}$ is an expression which vanishes in probability as $N_k \to \infty$ and $\varepsilon \to 0$ afterwards. 
Furthermore, replacing the sum by an integral, we only make an error of order $\mathcal{O}(\frac{1}{N})$.  In conclusion, 
\begin{align}
\label{honig100}
\begin{split}
\limsup\limits_{\varepsilon \to 0} \limsup\limits_{N \to \infty} \bbP  \Big( &  \Big| \langle \pi_{N^2 t}^N , G \rangle - \langle \pi_0 , G \rangle - \int\limits_{0}^{t} \langle \pi^N_{N^2 s}, \partial_s G \rangle \diff s   \\ &- \frac12  \int\limits_{0}^{t} \int\limits_{\T}  \partial_{x}^{2} G(s,u) \Phi \left(\int\limits_{\T} \iota_{\varepsilon} (v - u)  \pi^N_{N^2 s}(\diff v)  \right) \diff u \diff s   \Big| \geq \delta \Big) = 0. 
\end{split}
\end{align}
For each $\varepsilon > 0$ the map \begin{align*}
	(\pi_s)_{s \in [0,T]} \mapsto &\langle \pi_t , G \rangle - \langle \pi_0 , G \rangle - \int\limits_{0}^{t} \langle \pi_s, \partial_s G \rangle \diff s  \\ &- \frac12 \int\limits_{0}^{t}\int_{\bbT}  \partial_{x}^{2} G(s,u) \Phi\left(\int\limits_{\T} \iota_{\varepsilon} (v - u)  \pi_s(\diff v)  \right)\diff u \diff s 
\end{align*}
is continuous with respect to the Skorohod topology on $D([0,T], \caM_+)$. 
Thus the Portmanteau-Theorem is applicable, giving 
\begin{align}\label{honig}\begin{split}
\limsup\limits_{\varepsilon \to 0} \bbP  \Big( &  \Big| \langle \pi_t , G \rangle - \langle \pi_0 , G \rangle - \int\limits_{0}^{t} \langle \pi_s, \partial_s G \rangle \diff s   \\ &- \frac12  \int\limits_{0}^{t} \int\limits_{\T}  \partial_{x}^{2} G(s,u) \Phi (\int\limits_{\T} \iota_{\varepsilon} (v - u)  \pi_s(dv)  ) \diff u \diff s   \Big| \geq \delta \Big) = 0. \end{split}
\end{align}
The final goal in this step consists in getting rid of the $\varepsilon$, obtaining a function of the density.
Pointwise, by Lebesgue's differentiation theorem, we see that
\begin{align} \label{approxxi}
\langle \pi^{N}_{N^{2} s} , \iota_{\varepsilon} \rangle &= \frac{1}{2 \varepsilon} \int\limits_{\T} \id_{[- \varepsilon, \varepsilon]} (u) \ \pi^{N}_{N^{2} s} (du) \underset{N \to \infty}{\rightarrow} \frac{1}{2 \varepsilon} \int\limits_{\T} \id_{[- \varepsilon, \varepsilon]} (u) \cdot \rho (s,u) \diff u \underset{\varepsilon \to 0}{\rightarrow} \rho (s,0),
\end{align}
since we know that the limit $(\pi_t)$ is almost surely absolutely continuous. 

Since $\Phi$ is continuous, $\rho (s,\cdot ) \in [0,\frac{1}{r}]$ and $|\partial_{x}^{2} G | < \infty$, so by dominated convergence for $\varepsilon \downarrow 0$ we almost surely have 
\[ 
\int\limits_{0}^{t} \int\limits_{\T} \partial_{x}^{2}  G (s,u) \Phi ( \int\limits_{\T} \iota_{\varepsilon} (v-u) \pi_s (\diff v )) \diff u \diff s  \to  \int\limits_{0}^{t} \int\limits_{\T} \partial_{x}^{2}  G(s,u) \Phi ( \rho (s,u)) \diff u \diff s 
 \]
We conclude that 
\[ 
\limsup\limits_{\varepsilon \to 0} \bbP \Big( \Big| \int\limits_{0}^{t} \int\limits_{\T} \partial_{x}^{2} G (s,u) (\Phi (\int\limits_{\T} \iota_{\varepsilon} (v-u) \ \pi_s (dv)) - \Phi ( \rho (s,u)) )  \diff u \diff s \Big| > \delta \Big) = 0 \]
for all $\delta > 0$ and then let $\delta \to 0$. 

As a result, we have shown that all limit points $(\pi_t)$ of the empirical measures have a density $\rho(t,.)$ that almost surely 
satisfies the equation 
\[
\begin{split}
&  \int_\T \rho(t,x) G(t,x) \, \dd x -  \int_\T \rho(0,x) G(0,x) \, \dd x  = \\ 
& \quad =  \int_{0}^{t} \dd s 
 \int_\T \dd x \rho(s,x) \partial_s G(s,x) + \frac12  \int_\T \dd x \int_{0}^{t} \dd s \,  \Phi(\rho(s,x)) \partial_x^2 G (s,x).
\end{split}
\]
In other words, $\rho$ is almost surely a weak solution of \eqref{PDE}.
By Theorem 4.4 of Appendix A2.4 in \cite{Kipnis99} the uniqueness of the weak solution of \eqref{PDE} follows.
The proof is thus finished.

\section{Numerical approximation of the diffusion matrix in two dimensions} \label{sec:2dim}
We want to numerically approximate the diffusion matrix of the hydrodynamic limit of the finite range exclusion process in two dimensions with different rectangular exclusion zones $A$.
For this purpose, we assume that, for a given $A\subset\bbZ^2$, the hydrodynamic limit exists and is described by a nonlinear diffusion equation of the following form:
\begin{align}\label{eqn:diff_eq}
	\begin{cases}
		\begin{aligned}
			\partial_t\rho&=\operatorname{div}(D_A(\rho)\nabla\rho) &&\text{on }[0,T]\times\bbT^2,\\
			\rho(0,\cdot)&=\rho_0(\cdot) &&\text{on }\bbT^2,
		\end{aligned}
	\end{cases}
\end{align}
where $D_A:\bbR\to\bbR^{2\times2}$.
In \cite{fernandes2007}, a phase transition is observed for $A=\{0,1\}^2$ and $A=\{0,1,2\}^2$ at a critical density $\rho_c^A$.
For $\rho>\rho_c^A$, a columnary order emerges either along the $x$-axis or the $y$-axis.
In this partially ordered phase the mobility depends not only on the local density, but also on the direction of the ordering.
Since \eqref{eqn:diff_eq} describes the macroscopic evolution only in terms of the density, it does not account for the microscopic ordering and therefore cannot describe the hydrodynamic limit in the partially ordered phase.
Proposing a hydrodynamic limit equation in the partially ordered phase is beyond the scope of this work.
Hence, we restrict our analysis to the low density phase and assume that
\begin{align*}
	0\leq \rho_0<\rho_c^A\quad\text{a.e. on }\bbT^2.
\end{align*}
This implies that the solution $\rho$ to \eqref{eqn:diff_eq} is bounded by $\rho_c^A$ for a.e. $t\in[0,T]$ (see Lemma \ref{lemma:max_princ}).
Without loss of generality, we can assume that $D_A(\rho)$ is symmetric. 
To see this, let $M(\rho) := \frac12(D_A(\rho)-D_A(\rho)^T)$ and $F(r):=\int_0^rM(s)\diff s$.
Let $\rho\in H^1(\bbT^2)$ and $G:\bbT^2\to\bbR$ smooth.
Integration by parts yields
\begin{align*}
	\int_{\bbT^2}M(\rho)\nabla\rho\cdot\nabla G\diff x
	&=\sum_{i,j=1}^2\int_{\bbT^2} M_{i,j}(\rho)\partial_j\rho\partial_i G\diff x
	=-\sum_{i,j=1}^2\int_{\bbT^2} F_{i,j}(\rho)\partial_i\partial_j G\diff x\\
	&=\sum_{i,j=1}^2\int_{\bbT^2} F_{j,i}(\rho)\partial_j\partial_i G\diff x
	=-\sum_{i,j=1}^2\int_{\bbT^2} M_{j,i}(\rho)\partial_i\rho\partial_j G\diff x\\
	&=- \int_{\bbT^2}M(\rho)\nabla\rho\cdot\nabla G\diff x.
\end{align*}
Therefore by density, 
\begin{align*}
	\int_{\bbT^2}M(\rho)\nabla\rho\cdot\nabla G\diff x=0\quad\forall \rho,G\in H^1(\bbT^2),
\end{align*}
and the weak formulation is the same if we replace $D_A(\rho)$ by the symmetric part $\frac12(D_A(\rho)+D_A(\rho)^T)$.
We make the following assumptions for the diffusion matrix $D_A$.
\begin{assumption}\label{ass:diffusion_tensor}
	Assume that $D_A:[0,\rho_c^A)\to\bbR^{2\times2}$ is continuous.
	Moreover, $D_A$ is uniformly elliptic and bounded, i.e. there are $0<\lambda<\Lambda$ such that
	\begin{align*}
		\lambda|\xi|^2\leq\xi^TD_A(\rho)\xi\leq\Lambda |\xi|^2\quad\forall \xi\in\bbR^2,\,\rho\in[0,\rho_c^A).
	\end{align*}
\end{assumption}

Our main assumption on the hydrodynamic limit is 

\begin{assumption}\label{ass:hydrodynamic_limit}
	For every $\rho_0:\bbT^2\to[0,\rho_c^A)$ smooth, there exists a sequence $\eta_0^N$ of random elements in $\Omega_{N,A}$ that is associated to the profile $\rho_0$.
	Additionally, for every sequence $\eta_0^N$ of random elements in $\Omega_{N,A}$ that is associated to the profile $\rho_0$ the hydrodynamic limit of the 
	empirical densities of $(\eta_t)$ exists, is deterministic, and is given by $t \mapsto \rho(t,x) \dd x$, where $\rho$ is  
	a solution of \eqref{eqn:diff_eq}, where $D_A$ fulfills Assumption \ref{ass:diffusion_tensor}.
\end{assumption}

\begin{bem}
	\begin{enumerate}
		\item[(a)] Assumption \ref{ass:diffusion_tensor} collects regularity and non-degeneracy properties of the diffusion matrix.
			In particular, continuity of the diffusion matrix is the regularity needed for the linearization argument used below.
			The boundedness and ellipticity assumptions are used in the energy estimates in Appendix \ref{appendix:pde}.
		\item[(b)] Assumption \ref{ass:hydrodynamic_limit} states that the hydrodynamic limit exists, and that it is
		given by the solution to the PDE. The existence is the tricky part, but once we accept it, it is quite reasonable to assume 
		that the limiting dynamics is indeed a non-linear diffusion, and that the diffusion matrix has the desired regularity properties. 
		The former is expected to hold for all symmetric particle dynamics with a unique stationary distribution, and the latter should then 
		follow from the jump rates being uniformly bounded away from zero at low densities. Of course, to prove these things is likely
		very difficult. 		
	\end{enumerate}  
\end{bem}

Below, we will show that under these assumptions, for a rectangular exclusion zone $A$, the diffusion matrix is diagonal, 
while for a square exclusion zone it is a scalar multiple of the identity.\\
More precisely, we show that the particle process is invariant under reflection across an axis for rectangular $A$.
For a square exclusion zone, the process is in addition invariant under rotation by $\frac\pi2$.
We then show that these symmetries are inherited by the hydrodynamic limit and consequently impose corresponding algebraic constraints on the diffusion matrix.\\

\subsection{Symmetries of the hydrodynamic limit}
The next lemma proves that if the exclusion set $A$ is invariant under the action of an orthogonal matrix $U\in\bbZ^{d\times d}$, then the action of the Markov semigroup $P_t^N$ commutes with the action of $U$.
To prove this it is sufficient to prove that the generator \eqref{eqn:gen_A} commutes with the action of $U$.
\begin{lemma}\label{lemma:dist_rotated}
	Let $N\in\bbN$ and $\varphi_N:\bbT_N^d\to\bbT_N^d,x\mapsto Ux$ with $U\in\bbZ^{d\times d}$ orthogonal.
	If $UA=A+a$ for some $a\in\bbZ^d$, then for every probability measure $\mu^N$ on $\Omega_{N,A}$ and for every $t\geq 0$
	\begin{align*}
		(\mu^NP_t^N)\circ T_{\varphi_N}=(\mu^N\circ T_{\varphi_N})P_t^N,
	\end{align*}
	where $T_{\varphi_N}:\Omega_{N,A}\to\Omega_{N,A}, T_{\varphi_N}(\eta)=\eta\circ\varphi_N$.
\end{lemma}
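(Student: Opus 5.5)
The plan is to reduce everything to a commutation relation at the level of the generator. Write $T=T_{\varphi_N}$ and let it act on functions by $(Tf)(\eta)=f(T\eta)$. For a measure, $(\mu\circ T)(f)=\mu(Tf)$ and $(\mu P_t^N)(f)=\mu(P_t^N f)$. The claimed identity then says that $\mu^N(P_t^N Tf)=\mu^N(TP_t^N f)$ for all bounded $f$. It therefore suffices to prove $P_t^N T = T P_t^N$ as operators on functions on $\Omega_{N,A}$. Since $\Omega_{N,A}$ is finite, $P_t^N=\e{tL_N}$ is a matrix exponential. Hence it is enough to show $L_N T = T L_N$: the power series then commute term by term.

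First I check that $T$ is well defined as a map of $\Omega_{N,A}$ into itself. Since $U$ is an integer orthogonal matrix, it is a signed permutation matrix, so $\varphi_N$ is a bijection of the torus $\bbT_N^d$ (reduction mod $N$ is compatible) and maps neighbours to neighbours. Take $\eta\in\Omega_{N,A}$ and $x\neq y$ with $(T\eta)(x)=(T\eta)(y)=1$, so that $\eta(Ux)=\eta(Uy)=1$. Then $(Ux+A)\cap(Uy+A)=\emptyset$, and the hypothesis $UA=A+a$ gives
\[
U\big((x+A)\cap(y+A)\big)=(Ux+UA)\cap(Uy+UA)=\big((Ux+A)\cap(Uy+A)\big)+a=\emptyset .
\]
So $T\eta\in\Omega_{N,A}$. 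The map $T$ is also a bijection, with inverse $T_{\varphi_N^{-1}}$; the same argument applies because $U^{-1}A=A-U^{-1}a$.

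Next I verify that the rates are compatible. From the definition of $w_A$ and the same set identity with $z$ in place of $x$, we get $w_A(T\eta,x,y)=w_A(\eta,Ux,Uy)$. I also need $(T\eta)^{x,y}=T(\eta^{Ux,Uy})$, which is immediate: both configurations evaluate at $z$ to $\eta$ at $U z$ with $Ux$ and $Uy$ swapped. Then
\[
[L_N Tf](\eta)=\tfrac12\sum_{x}\sum_{y\sim x} w_A(\eta,x,y)\big(f(T(\eta^{x,y}))-f(T\eta)\big),
\]
while
\[
[TL_N f](\eta)=\tfrac12\sum_{x}\sum_{y\sim x} w_A(T\eta,x,y)\big(f((T\eta)^{x,y})-f(T\eta)\big)=\tfrac12\sum_{x}\sum_{y\sim x} w_A(\eta,Ux,Uy)\big(f(T(\eta^{Ux,Uy}))-f(T\eta)\big).
\]
Reindexing by $x'=Ux$, $y'=Uy$ is legitimate because $\varphi_N$ is a bijection preserving the neighbour relation. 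This makes the two sums equal.

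The main obstacle is only bookkeeping: keeping the direction of the pullback straight, namely $T$ acting on configurations versus on functions versus on measures. One must also check that the set identity behaves correctly modulo $N$. The shift by $a$ is harmless because translation commutes with taking intersections, and the reduction mod $N$ is compatible with the linear map $U$ with integer entries.
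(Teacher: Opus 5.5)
Your proposal is correct and follows essentially the same route as the paper. Both arguments prove $L_N(f\circ T_{\varphi_N})=(L_Nf)\circ T_{\varphi_N}$ using the invariance of the overlap indicator under $\varphi_N$ and the swap identity $(\eta\circ\varphi_N)^{x,y}=\eta^{\varphi_N(x),\varphi_N(y)}\circ\varphi_N$, reindex the sum over the neighbour-preserving bijection $\varphi_N$, and then pass to $P_t^N$ and, by duality, to measures. Your extra checks fill in points the paper leaves implicit: that $U$ is a signed permutation matrix, so $\varphi_N$ is well defined on the torus, and that $T_{\varphi_N}$ maps $\Omega_{N,A}$ bijectively onto itself.
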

\begin{proof}
	Since $\Omega_{N,A}$ is finite, it is sufficient to show that
	\begin{align*}
		(L_N(f\circ T_{\varphi_N}))=(L_N f)\circ T_{\varphi_N}
	\end{align*}
	for every $f:\Omega_{N,A}\to\bbR$.\\
	First, note that $UA=A+a$ implies
	\begin{align*}
		\varphi_N(y+A)=\varphi_N(y)+A+a.
	\end{align*}
	Consequently,
	\begin{align}\label{eqn:exclusion_symmetry}
		(y+A)\cap(z+A)\neq\emptyset\quad\Longleftrightarrow\quad (\varphi_N(y)+A)\cap(\varphi_N(z)+A)\neq\emptyset.
	\end{align}
	Moreover,
	\begin{align}\label{eqn:change_psi}
		(\eta\circ\varphi_N)^{x,y}=\eta^{\varphi_N(x),\varphi_N(y)}\circ\varphi_N\quad\forall\eta\in\Omega_{N,A}.
	\end{align}
	Using \eqref{eqn:exclusion_symmetry} and \eqref{eqn:change_psi} in the definition of the generator \eqref{eqn:gen_A} gives
	\begin{multline*}
		(L_N\,f)(T_{\varphi_N}(\eta))
		=\frac12\sum_{x\in\bbT_N^d}\sum_{|x-y|=1}\eta(\varphi_N(x))\prod_{z\neq x}\left(1-\eta(\varphi_N(z))\bbone_{(\varphi_N(y) + A) \cap (\varphi_N(z)+A) \neq \emptyset}\right)\\
		\times(f(\eta^{\varphi_N(x),\varphi_N(y)}\circ\varphi_N)-f(\eta\circ\varphi_N)).
	\end{multline*}
	The change of variables $x\mapsto \varphi_N^{-1}(x)$ and using that $\varphi_N^{-1}$ preserves nearest-neighbour bonds yields
	\begin{align*}
		(L_N\,f)(T_{\varphi_N}(\eta))
		&=\frac12\sum_{x\in\bbT_N^d}\sum_{|x-y|=1}\eta(x)\prod_{z\neq x}\left(1-\eta(z)\bbone_{(y + A) \cap (z+A) \neq \emptyset}\right)( f(T_{\varphi_N}(\eta^{x,y}))-f(T_{\varphi_N}(\eta)))\\
		&= (L_N\,(f\circ T_{\varphi_N}))(\eta).
	\end{align*}
	Since $P_t^N=e^{t L_N}$, it follows that for all $f:\Omega_{N,A}\to\bbR$ and $t\geq 0$
	\begin{align*}
		P_t^N(f\circ T_{\varphi_N}) = (P_t^N f)\circ T_{\varphi_N}.
	\end{align*}
	Therefore, for every probability measure $\mu^N$ and every function $f:\Omega_{N,A}\to\bbR$, we have
	\begin{align*}
		\int f\circ T_{\varphi_N}\diff (\mu^NP_t^N)
		&= \int P_t^N(f\circ T_{\varphi_N})\diff \mu^N
		= \int P_t^N(f)\circ T_{\varphi_N}\diff \mu^N\\
		&= \int P_t^N(f)\diff(\mu^N\circ T_{\varphi_N})
		= \int f\diff((\mu^N\circ T_{\varphi_N})P_t^N).
	\end{align*}
	This proves the claim.
\end{proof}
The next lemma relates symmetries of weak solutions of the hydrodynamic limit equation to algebraic constraints of the diffusion matrix.
It is sufficient to consider initial conditions that are small perturbations of constant density profiles.
The proof can be found in Appendix \ref{appendix:pde}.
\begin{lemma}\label{lemma:sol_to_matrix}
	Let Assumption \ref{ass:diffusion_tensor} hold.
	Let $\varphi:\bbT^2\to\bbT^2,x\mapsto Ux$ with $U\in\bbZ^{2\times2}$ orthogonal.
	Suppose that for every $\overline{\rho}\in(0,\rho_c^A)$, every $k\in\bbZ^2$, and every sufficiently small $\varepsilon>0$ such that
	\begin{align*}
		0<\rho_0^\varepsilon(x):=\overline{\rho}+\varepsilon\sin(2\pi k\cdot x)<\rho_c^A\quad \forall x\in\bbT^2,
	\end{align*}
	there exists a weak solution of \eqref{eqn:diff_eq} with initial condition $\rho_0^\varepsilon$ such that $(t,x)\mapsto\rho^\varepsilon(t,\varphi(x))$ is a weak solution to \eqref{eqn:diff_eq}, with initial condition $\rho_0^\varepsilon\circ\varphi$.
	Then
	\begin{align*}
		U^TD_A(\rho)U=D_A(\rho)\quad\forall \rho\in[0,\rho_c^A).
	\end{align*}
\end{lemma}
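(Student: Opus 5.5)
The plan is a linearization argument. Fix $\overline{\rho}\in(0,\rho_c^A)$ and $k\in\bbZ^2$, let $\rho^\varepsilon$ be the weak solution from the hypothesis, and write $\rho^\varepsilon=\overline{\rho}+\varepsilon w^\varepsilon$. The function $w^\varepsilon$ solves, in the weak sense,
\[
\partial_t w^\varepsilon=\operatorname{div}\big(D_A(\overline{\rho}+\varepsilon w^\varepsilon)\nabla w^\varepsilon\big),\qquad w^\varepsilon(0,\cdot)=\sin(2\pi k\cdot x).
\]
As $\varepsilon\to0$, $w^\varepsilon$ should converge to the solution $w$ of the constant-coefficient linear equation $\partial_t w=\operatorname{div}(D_A(\overline{\rho})\nabla w)$ with the same initial datum. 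For symmetric $D=D_A(\overline{\rho})$ this solution is explicit:
\[
w(t,x)=\e{-4\pi^2 (k^TDk)t}\sin(2\pi k\cdot x).
\]

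To justify the limit I would use energy estimates based on Assumption \ref{ass:diffusion_tensor}. Testing with $w^\varepsilon$ gives a uniform bound for $w^\varepsilon$ in $L^\infty(0,T;L^2)\cap L^2(0,T;H^1)$, and the equation gives a bound for $\partial_t w^\varepsilon$ in $L^2(0,T;H^{-1})$. By Aubin--Lions we get strong $L^2$ convergence along subsequences. In particular $\varepsilon w^\varepsilon\to0$ in $L^2$, so by continuity and boundedness of $D_A$ we have $D_A(\overline{\rho}+\varepsilon w^\varepsilon)\to D$ strongly in every $L^p$ by dominated convergence. Combined with the weak convergence of $\nabla w^\varepsilon$, this lets us pass to the limit in the weak formulation. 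Uniqueness for the linear problem then makes the whole family converge to $w$.

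\textbf{Main obstacle.} The hard part is passing to the limit in the nonlinear term without uniform $L^\infty$ bounds on $w^\varepsilon$, since only $0<\rho^\varepsilon<\rho_c^A$ is available. One can route around this by testing the difference $w^\varepsilon-w$ directly. This gives
\[
\tfrac{d}{dt}\|w^\varepsilon-w\|^2\le -2\lambda\|\nabla(w^\varepsilon-w)\|^2+2\big\|\big(D_A(\overline{\rho}+\varepsilon w^\varepsilon)-D\big)\nabla w\big\|\,\|\nabla(w^\varepsilon-w)\|,
\]
where $\nabla w$ is smooth and bounded. Young's inequality then yields $\sup_t\|w^\varepsilon-w\|_{L^2}\to0$, once the middle factor tends to zero by convergence in measure of $\varepsilon w^\varepsilon$ and dominated convergence.

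Next apply the same argument to the rotated solution $\tilde\rho^\varepsilon=\rho^\varepsilon\circ\varphi$. By hypothesis it is a weak solution with initial datum $\overline{\rho}+\varepsilon\sin(2\pi k\cdot Ux)=\overline{\rho}+\varepsilon\sin(2\pi (U^Tk)\cdot x)$. Its linearization therefore converges to $\e{-4\pi^2((U^Tk)^TDU^Tk)t}\sin(2\pi U^Tk\cdot x)$. It also equals $w^\varepsilon\circ\varphi$, which converges to $w\circ\varphi=\e{-4\pi^2(k^TDk)t}\sin(2\pi U^Tk\cdot x)$, because composition with the measure-preserving map $\varphi$ is an $L^2$ isometry. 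Comparing the two limits at some $t>0$ gives
\[
k^TUDU^Tk=k^TDk\qquad\text{for all }k\in\bbZ^2 .
\]
By homogeneity this extends to $\bbQ^2$ and, by density, to $\bbR^2$. Polarization then gives $UDU^T=D$, hence $U^TDU=D$ since $U$ is orthogonal.

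This holds for every $\overline{\rho}\in(0,\rho_c^A)$, and continuity of $D_A$ extends it to $\overline{\rho}=0$. We use the symmetric representative of $D_A$ throughout, as justified earlier in the paper.
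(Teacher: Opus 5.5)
Your proposal is correct and follows essentially the paper's argument. You linearize around $\overline{\rho}$ via an energy estimate on the residual, compare the explicit decay rates $k^TD_A(\overline{\rho})k$ and $k^TUD_A(\overline{\rho})U^Tk$ of the Fourier mode, polarize using symmetry, and extend to $\rho=0$ by continuity. The differences are minor: the paper pulls the rotated weak formulation back so that $\rho^\varepsilon$ itself solves the equation with $UD_AU^T$, and it controls $D_A(\rho^\varepsilon)-D_A(\overline{\rho})$ uniformly via the weak maximum principle (Lemma \ref{lemma:max_princ}), which in fact gives the bound $|w^\varepsilon|\leq 1$ you thought was unavailable. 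Your convergence-in-measure plus dominated-convergence route works equally well without it.
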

Using Lemma \ref{lemma:dist_rotated} and Lemma \ref{lemma:sol_to_matrix}, we show that the diffusion matrix $D_A$ is also invariant with respect to the action of $U$, if $A$ is invariant with respect to $U$ up to translation.
\begin{lemma}\label{lemma:rotated_particle}
	Let Assumptions \ref{ass:diffusion_tensor} and \ref{ass:hydrodynamic_limit} hold.
	Let $\varphi:\bbT^2\to\bbT^2, \varphi(x)= Ux$ with $U\in\bbZ^{2\times 2}$ orthogonal.
	If $UA=A+a$ for some $a\in\bbZ^2$, then
	\begin{align*}
		U^TD_A(\rho)U=D_A(\rho)\quad\forall \rho\in[0,\rho_c^A).
	\end{align*}
\end{lemma}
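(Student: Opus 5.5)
The plan is to verify the hypothesis of Lemma \ref{lemma:sol_to_matrix}; its conclusion is exactly the claim. Fix $\overline{\rho}\in(0,\rho_c^A)$, $k\in\bbZ^2$ and $\varepsilon>0$ small, so that $\rho_0^\varepsilon$ is smooth with values in $(0,\rho_c^A)$. By Assumption \ref{ass:hydrodynamic_limit} there is a sequence of random initial configurations $\eta_0^N$, with laws $\mu^N$, associated to $\rho_0^\varepsilon$. For this sequence the hydrodynamic limit exists, is deterministic, and is given by a weak solution $\rho^\varepsilon$ of \eqref{eqn:diff_eq}. I will show that $\rho^\varepsilon\circ\varphi$ is also the hydrodynamic limit of some admissible sequence. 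Since that limit is again a weak solution of \eqref{eqn:diff_eq}, this is exactly what Lemma \ref{lemma:sol_to_matrix} requires.

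First I need that $\varphi_N(x)=Ux$ is a well-defined bijection of $\bbT_N^2$ that preserves nearest-neighbour bonds. This holds because $U$ is an integer orthogonal matrix, i.e. a signed permutation matrix, and reduction modulo $N$ commutes with it. Consider the transformed initial laws $\tilde\mu^N:=\mu^N\circ T_{\varphi_N}$, the law of $\eta_0^N\circ\varphi_N$ (up to the convention of $\varphi_N$ versus $\varphi_N^{-1}$). These are again supported on $\Omega_{N,A}$ by \eqref{eqn:exclusion_symmetry}, since $UA=A+a$. Their empirical measures satisfy
\[
\langle \pi^N_{\eta\circ\varphi_N},G\rangle=\frac1{N^2}\sum_x G(\tfrac xN)\,\eta(\varphi_N(x))=\langle\pi^N_\eta, G\circ\varphi^{-1}\rangle ,
\]
using $\varphi_N(x)/N=\varphi(x/N)$ on the torus. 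Since $G\circ\varphi^{-1}$ is bounded and continuous, $(\tilde\mu^N)$ is associated to the profile $\rho_0^\varepsilon\circ\varphi$, because Lebesgue measure is invariant under $\varphi$.

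Next, Lemma \ref{lemma:dist_rotated} gives $(\mu^NP^N_{N^2t})\circ T_{\varphi_N}=\tilde\mu^N P^N_{N^2t}$. Since it holds for every probability measure, it also holds at the level of the Markov property. The Markov process started from $\tilde\mu^N$ therefore has the same path law as $(\eta_{N^2t}\circ\varphi_N)_{t\le T}$. Applying the identity above to the whole path, the empirical-measure process of the transformed process is the image of the original one under the map $(\nu_t)_t\mapsto(\nu_t\circ\varphi^{-1})_t$, i.e. the pushforward by $\varphi^{-1}$. This map is continuous on $D([0,T],\caM_+(\bbT^2))$. The continuous mapping theorem then shows that the transformed process converges to $t\mapsto\rho^\varepsilon(t,\varphi(x))\dd x$.

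On the other hand, Assumption \ref{ass:hydrodynamic_limit} applied to $(\tilde\mu^N)$, which is associated to $\rho_0^\varepsilon\circ\varphi$, says this limit is a weak solution of \eqref{eqn:diff_eq} with initial datum $\rho_0^\varepsilon\circ\varphi$. Hence $\rho^\varepsilon\circ\varphi$ is a weak solution, and Lemma \ref{lemma:sol_to_matrix} yields $U^TD_A(\rho)U=D_A(\rho)$.

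The main obstacle is the second step: passing from Lemma \ref{lemma:dist_rotated}, which is stated for one-time marginals, to equality in law of the whole trajectory. I would handle this by using the semigroup identity at all times together with the Markov property, so that all finite-dimensional distributions agree, and then conclude via the Skorohod space. A minor additional point is keeping $\varphi$ and $\varphi^{-1}$ straight; this is harmless because $U^{-1}=U^T$ also satisfies the hypothesis, with $U^TA=A-U^Ta$.
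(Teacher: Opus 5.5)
Your proposal is correct and follows the same route as the paper's proof: push the initial laws forward by $T_{\varphi_N}$, check association to the transformed profile, identify the transformed process with the image of the original empirical-measure process via Lemma \ref{lemma:dist_rotated}, apply the continuous mapping theorem and uniqueness of the deterministic limit, and conclude with Lemma \ref{lemma:sol_to_matrix}. You work directly with the profiles $\rho_0^\varepsilon$ rather than a generic $\rho_0$, which changes nothing; your upgrade of the semigroup identity to equality of path laws via the Markov property, and your remark that the $\varphi$ versus $\varphi^{-1}$ convention is harmless because $U^T$ satisfies the same hypothesis, fill in two points the paper passes over silently.
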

\begin{proof}
	For $N\in\bbN$, we denote 
	\begin{align*}
		\varphi_N:\bbT^2_N\to\bbT_N^2,\,x\mapsto Ux.
	\end{align*}
	Since $U\in\bbZ^{2\times 2}$ is orthogonal, we obtain $\varphi_N(\bbT^2_N)=\bbT^2_N$.\\
	Fix an admissible initial profile $\rho_0:\bbT^2\to[0,\rho_c^A)$ smooth, and let $(\mu^N)_N$ be a sequence of probability measures associated to $\rho_0$ and satisfying Assumption \ref{ass:hydrodynamic_limit}.
	We show that
	\begin{align}\label{eqn:conv_perturbed_initial}
		\{\pi_{N^2t}^N\circ\varphi^{-1}\,:\,t\in[0,T]\}\to\{\tilde{\rho}(t,x)\diff x\,:\,t\in[0,T]\}\quad\text{in distribution},
	\end{align}
	with $\tilde{\rho}$ a weak solution of \eqref{eqn:diff_eq} with initial condition $\rho_0\circ\varphi$.
	After that we show that
	\begin{align}\label{eqn:conv_perturbed_sol}
		\{\pi_{N^2t}^N\circ\varphi^{-1}\,:\,t\in[0,T]\}\to\{\rho(t,\varphi(x))\diff x\,:\,t\in[0,T]\}\quad\text{in distribution},
	\end{align}
	with $\rho$ a weak solution of \eqref{eqn:diff_eq} with initial condition $\rho_0$.
	Therefore, we have $\tilde{\rho}=\rho\circ\varphi$.
	The claim now follows from Lemma \ref{lemma:sol_to_matrix}.\\
	We first show that $\mu^N\circ T_{\varphi_N}$ is associated to $\rho_0\circ\varphi$. 
	Indeed, for every continuous $G:\bbT^2\to\bbR$ and every $\delta>0$,
	\begin{multline*}
		\lim_{N\to\infty}\mu^N\circ T_{\varphi_N}\left(\left|\int_{\bbT^2} G(x)\pi_0^N(\diff x)-\int_{\bbT^2} G(x)\rho_0(\varphi(x))\diff x\right|\geq \delta\right)\\
		=\lim_{N\to\infty}\mu^N\left(\left|\int_{\bbT^2} G(x)(\pi_0^N\circ\varphi^{-1})(\diff x)-\int_{\bbT^2} G(x)\rho_0(\varphi(x))\diff x\right|\geq \delta\right)\\
		=\lim_{N\to\infty}\mu^N\left(\left|\int_{\bbT^2} G(\varphi^{-1}(x))(\pi_0^N)(\diff x)-\int_{\bbT^2} G(\varphi^{-1}(x))\rho_0(x)\diff x\right|\geq \delta\right)=0.
	\end{multline*}
	Denote by $(\tilde{\pi}_t)_{t\geq 0}$ the empirical measure corresponding to the particle process with initial distribution $\mu^N\circ T_{\varphi_N}$.
	Assumption \ref{ass:hydrodynamic_limit} therefore implies that under the initial distribution $(\mu^N\circ T_{\varphi_N})$,
	\begin{align}\label{eqn:limit_equation}
		\{\tilde{\pi}_{N^2t}^N\,:\,t\in[0,T]\}\to\{\tilde{\rho}(t,x)\,\diff x\,:\,t\in[0,T]\}\quad\text{in distribution},
	\end{align}
	for $N\to\infty$, where $\tilde{\rho}(t,x)$ is a weak solution of \eqref{eqn:diff_eq} with initial condition $\rho_0\circ\varphi$.\\
	We now show \eqref{eqn:conv_perturbed_sol} using the continuous mapping theorem.
	By Lemma \ref{lemma:dist_rotated},
	\begin{align*}
		\{\tilde{\pi}_{N^2t}^N\,:\,t\in[0,T]\}\quad\text{ and }\quad\{\pi_{N^2t}^N\circ\varphi^{-1}\,:\,t\in[0,T]\}
	\end{align*}
	have the same law.\\
	The map 
	\begin{align*}
		\mathcal{M}_+(\bbT^2)\to\mathcal{M}_+(\bbT^2),\,\mu\mapsto \mu\circ\varphi^{-1},
	\end{align*}
	is continuous with respect to the weak topology.
	Hence, the induced map on $D([0,T],\mathcal{M}_+(\bbT^2))$ is continuous.
	Applying the continuous mapping theorem gives for $N\to\infty$
	\begin{align}\label{eqn:limit_action}
		\{\pi_{N^2t}^N\circ\varphi^{-1}\,:\,t\in[0,T]\}\to\{\rho(t,\varphi(x))\,\diff x\,:\,t\in[0,T]\}\quad\text{in distribution},
	\end{align}
	Combining \eqref{eqn:limit_equation} and \eqref{eqn:limit_action}, and using uniqueness of the limit in distribution, we obtain $\tilde{\rho}=\rho\circ\varphi$.\\
	By Lemma \ref{lemma:sol_to_matrix}, the claim follows.
\end{proof}
The next lemma relates the algebraic conditions of the diffusion matrix $D_A$ to the entries of $D_A$ if the process is symmetric with respect to reflection across the $x$-axis and rotation by $\frac\pi2$.
\begin{lemma}\label{lemma:Rotation}
	Let
	\begin{align*}
		U_1=
		\begin{pmatrix}
			1&0\\
			0&-1
		\end{pmatrix},\quad
		U_2=
		\begin{pmatrix}
			0&-1\\
			1&0
		\end{pmatrix},
	\end{align*}
	be the orthogonal matrices that correspond to the reflection across the $x$-axis and rotation by $\frac{\pi}2$ respectively.
	Let $M\in\bbR^{2\times 2}$.
	If $U_1MU_1^T=M$ then $M$ is a diagonal matrix.
	If additionally $U_2MU_2^T=M$ then there exists $d\in\bbR$ such that $M=dI$.
\end{lemma}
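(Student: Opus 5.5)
The plan is a direct entrywise computation: write $M=\begin{pmatrix} m_{11}&m_{12}\\ m_{21}&m_{22}\end{pmatrix}$, compute the two conjugations explicitly, and compare entries. No earlier result from the paper is needed, only matrix multiplication and the orthogonality $U_i^T=U_i^{-1}$.

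For the first claim, use that $U_1$ is symmetric with $U_1^T=U_1$. Conjugating by $U_1$ multiplies row $i$ and column $j$ by the signs $(U_1)_{ii}$ and $(U_1)_{jj}$. This gives
\begin{align*}
U_1MU_1^T=\begin{pmatrix} m_{11}&-m_{12}\\ -m_{21}&m_{22}\end{pmatrix}.
\end{align*}
Equating with $M$ forces $m_{12}=-m_{12}$ and $m_{21}=-m_{21}$, hence $m_{12}=m_{21}=0$. So $M=\operatorname{diag}(m_{11},m_{22})$.

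For the second claim, insert this diagonal form and compute with $U_2^T=\begin{pmatrix}0&1\\-1&0\end{pmatrix}$. First
\begin{align*}
U_2\operatorname{diag}(m_{11},m_{22})=\begin{pmatrix}0&-m_{22}\\ m_{11}&0\end{pmatrix},
\end{align*}
and multiplying on the right by $U_2^T$ gives $U_2MU_2^T=\operatorname{diag}(m_{22},m_{11})$. In words, rotation by $\pi/2$ swaps the two coordinate axes. Equality with $M$ yields $m_{11}=m_{22}=:d$, so $M=dI$.

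There is no real obstacle. The only point needing care is the sign bookkeeping in the $U_2$ conjugation, which can be checked by noting that $U_2e_1=e_2$ and $U_2e_2=-e_1$, so the two diagonal entries are exchanged and the signs cancel.
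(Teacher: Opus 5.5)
Your proof is correct and essentially the paper's: for the second claim the paper does the same computation $U_2MU_2^T=\operatorname{diag}(M_{2,2},M_{1,1})$. For the first claim it rewrites $U_1MU_1^T=M$ as $MU_1^T=U_1^TM$ and cites the fact that a matrix commuting with a diagonal matrix with distinct diagonal entries is diagonal, which your explicit sign computation proves directly.
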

\begin{proof}
	We can write $U_1MU_1^T=M$ as $MU_1^T=U_1^TM$.
	Hence $M$ commutes with $U_1^T$ and $U_1^T$ is diagonal with distinct values at the diagonal and therefore $M$ needs to be diagonal (see \cite[Theorem 1.37]{Higham08}).\\
	If in addition $U_2MU_2^T=M$ we obtain
	\begin{align*}
		U_2MU_2^T=
		\begin{pmatrix}
			M_{2,2}&0\\
			0 & M_{1,1}
		\end{pmatrix}
		= M.
	\end{align*}
	Thus, there exists a $d\in\bbR$ such that $M=dI$.
\end{proof}
We can now state that the diffusion matrix is diagonal if $A$ is rectangular and isotropic if $A$ is a square.
\begin{cor}\label{cor:diffusion_form}
	Let Assumptions \ref{ass:diffusion_tensor} and \ref{ass:hydrodynamic_limit} hold.
	Let $A=\{0,\dots,r_1-1\}\times\{0,\dots,r_2-1\}$ for $r_1,r_2\geq 1$.
	Then $D_A:[0,\rho_c^A)\to \bbR^{2\times2}$ is a diagonal matrix.
	If in addition $r_1=r_2$ then there exists $d_A:[0,\rho_c^A)\to\bbR$ such that $D_A(\rho) = d_A(\rho)I$ for all $\rho\in[0,\rho_c^A)$.
\end{cor}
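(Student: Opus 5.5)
The plan is to combine Lemma \ref{lemma:rotated_particle} with Lemma \ref{lemma:Rotation}. This requires exhibiting orthogonal integer matrices $U$ with $UA=A+a$ for some $a\in\bbZ^2$.

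\textbf{Reflection.} For the rectangle $A=\{0,\dots,r_1-1\}\times\{0,\dots,r_2-1\}$, take $U_1=\mathrm{diag}(1,-1)$. Then
\[
U_1A=\{0,\dots,r_1-1\}\times\{-(r_2-1),\dots,0\}=A+a, \qquad a=(0,-(r_2-1)).
\]
Lemma \ref{lemma:rotated_particle} gives $U_1^TD_A(\rho)U_1=D_A(\rho)$ for all $\rho\in[0,\rho_c^A)$. Since $U_1$ is symmetric with $U_1^T=U_1=U_1^{-1}$, this is the same as $U_1D_A(\rho)U_1^T=D_A(\rho)$. The first part of Lemma \ref{lemma:Rotation} then shows that $D_A(\rho)$ is diagonal for every $\rho$.

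\textbf{Rotation.} If $r_1=r_2=r$, take $U_2$, the rotation by $\pi/2$, which sends $(x,y)\mapsto(-y,x)$. Then
\[
U_2A=\{-(r-1),\dots,0\}\times\{0,\dots,r-1\}=A+a, \qquad a=(-(r-1),0).
\]
Lemma \ref{lemma:rotated_particle} yields $U_2^TD_A(\rho)U_2=D_A(\rho)$. Lemma \ref{lemma:Rotation} is phrased as $U_2MU_2^T=M$, so a small conversion is needed. Conjugating the identity $U_2^TMU_2=M$ by $U_2$ (that is, multiplying by $U_2$ on the left and $U_2^T$ on the right, using orthogonality) gives $M=U_2MU_2^T$. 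Alternatively, one can apply Lemma \ref{lemma:rotated_particle} to $U_2^T=U_2^{-1}$, the rotation by $-\pi/2$, which also maps $A$ to a translate of itself.

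Combining this with the diagonality already obtained, the second part of Lemma \ref{lemma:Rotation} gives $D_A(\rho)=d_A(\rho)I$. Here $d_A(\rho)$ is the common diagonal entry, namely $(D_A(\rho))_{1,1}$.

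The only real care needed is in two places. First, the translates $a$ must be checked to be integer vectors, which they are. Second, the transpose conventions of Lemma \ref{lemma:rotated_particle} and Lemma \ref{lemma:Rotation} must be matched, which is handled by the orthogonality argument above. There is no substantive obstacle beyond this bookkeeping.
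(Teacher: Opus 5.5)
Your proposal is correct and is essentially the paper's proof: you use the same reflection $U_1=\mathrm{diag}(1,-1)$ and rotation $U_2$, with the same translates $a=(0,-(r_2-1))$ and $a=(-(r_1-1),0)$, and then apply Lemma \ref{lemma:rotated_particle} followed by Lemma \ref{lemma:Rotation}. Your explicit matching of the convention $U^TD_A(\rho)U=D_A(\rho)$ with the form $UMU^T=M$ is a step the paper leaves implicit, and you handle it correctly.
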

\begin{proof}
	Let 
	\begin{align*}
		U_1 = 
		\begin{pmatrix}
			1&0\\
			0&-1
		\end{pmatrix}.
	\end{align*}
	Then $U_1A=A-(r_2-1)
	\begin{pmatrix}
		0\\1
	\end{pmatrix}$.
	Hence $D_A(\rho)$ is diagonal for all $\rho\in[0,\rho_c^A)$ by Lemma \ref{lemma:rotated_particle} and \ref{lemma:Rotation}.\\
	Let $r_1=r_2$ and 
	\begin{align*}
		U_2=\begin{pmatrix}
			0&-1\\
			1&0
		\end{pmatrix}.
	\end{align*}
	Then $U_2A=A-(r_1-1)
	\begin{pmatrix}
		1\\0
	\end{pmatrix}$.
	Hence by Lemma \ref{lemma:rotated_particle} and \ref{lemma:Rotation}, there exists $d_A:[0,\rho_c^A)\to\bbR$ with $D_A(\rho)=d_A(\rho)I$ for all $\rho\in[0,\rho_c^A)$.
\end{proof}
\subsection{Estimation of the diffusion matrix}
We use a Fourier mode perturbation of a constant state as an initial condition to numerically estimate the diffusion matrix $D_A(\rho)$ for some rectangular $A\subset\bbZ^2$.
By Corollary \ref{cor:diffusion_form}, we may write
\begin{align*}
	D_A(\rho)=
	\begin{pmatrix}
		D_{1,1}(\rho)&0\\
		0&D_{2,2}(\rho)
	\end{pmatrix}.
\end{align*}
Notice that for a linear diffusion problem 
\begin{align*}
	\begin{cases}
	\begin{aligned}
		\partial_t \psi = \operatorname{div}(M\nabla \psi)\quad&&\text{on }(0,T)\times\bbT^2,\\
		\psi(0,x_1,x_2)=\sin(2\pi x_1)&&\quad \forall (x_1,x_2)\in\bbT^2,
	\end{aligned}	
	\end{cases}
\end{align*}
with $M\in\bbR^{2\times 2}$ symmetric positive definite, the unique solution is given by
\begin{align*}
	\psi(t,x_1,x_2)=\exp(-4\pi^2 M_{1,1}t)\sin(2\pi x_1)
\end{align*}
Therefore, we can use the solution $\psi$ to get the value $M_{1,1}$ and by rotation also the value $M_{2,2}$.
We want to use this to obtain approximations of $D_A(\rho)$ for $\rho\in[0,\rho_c^A)$.
However, for a nonlinear diffusion equation of the form \eqref{eqn:diff_eq} the preceding arguments does not apply directly.
The next lemma states that for a sequence of suitable initial conditions the corresponding solutions to \eqref{eqn:diff_eq} approximate the solution of a linear diffusion problem.
Its proof can be found in Appendix \ref{appendix:pde}.
\begin{lemma}\label{lemma:conv_lin_diff}
	Let Assumption \ref{ass:diffusion_tensor} hold.
	Let $\overline{\rho}\in(0,\rho_c^A)$, $\psi_0\in L^\infty(\bbT^2)$ and $T>0$.
	For each sufficiently small $\varepsilon>0$, let $\rho^\varepsilon$ be a weak solution to \eqref{eqn:diff_eq} with initial condition $\rho_0^\varepsilon:=\overline{\rho}+\varepsilon\psi_0$.
	Then 
	\begin{align*}
		\frac{\rho^\varepsilon-\overline{\rho}}{\varepsilon}\to\psi \quad\text{for }\varepsilon\to 0\quad\text{in } L^\infty(0,T;L^2(\bbT^2)),
	\end{align*}
	with $\psi$ the weak solution of
	\begin{align}\label{eqn:lin_orig}
		\begin{cases}
			\begin{aligned}
				\partial_t \psi&=\operatorname{div}(D_A(\overline{\rho})\nabla\psi) \quad&&\text{on }(0,T]\times\bbT^2\\
				\psi(0,\cdot)&=\psi_0(\cdot) &&\text{on }\bbT^2.
			\end{aligned}
		\end{cases}
	\end{align}
\end{lemma}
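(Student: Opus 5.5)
We linearize by hand. Set $w^\varepsilon := (\rho^\varepsilon-\overline{\rho})/\varepsilon - \psi$, so $w^\varepsilon(0)=0$, and derive an energy estimate for $w^\varepsilon$. First, the maximum principle (Lemma \ref{lemma:max_princ}) and the fact that $\overline{\rho}<\rho_c^A$ give
\[
|\rho^\varepsilon-\overline{\rho}|\leq \varepsilon\|\psi_0\|_\infty
\]
for a.e.\ $(t,x)$ once $\varepsilon$ is small. Hence $\rho^\varepsilon$ stays in a compact subinterval of $[0,\rho_c^A)$. Next, testing \eqref{eqn:diff_eq} for $\rho^\varepsilon-\overline{\rho}$ with $\rho^\varepsilon-\overline{\rho}$ itself (justified by a Steklov average or the standard Lions--Magenes argument) and using ellipticity from Assumption \ref{ass:diffusion_tensor} yields
\[
\sup_t\|\rho^\varepsilon(t)-\overline{\rho}\|_{L^2}^2+2\lambda\int_0^T\|\nabla\rho^\varepsilon\|_{L^2}^2\diff t\leq \varepsilon^2\|\psi_0\|_{L^2}^2 .
\]
So $u^\varepsilon:=(\rho^\varepsilon-\overline{\rho})/\varepsilon$ is bounded in $L^\infty(0,T;L^2)\cap L^2(0,T;H^1)$, uniformly in $\varepsilon$. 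The same estimate holds for $\psi$.

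Subtracting the weak formulations of $u^\varepsilon$ and $\psi$, the function $w^\varepsilon$ satisfies
\[
\partial_t w^\varepsilon=\operatorname{div}\big(D_A(\overline{\rho})\nabla w^\varepsilon\big)+\operatorname{div}\big((D_A(\rho^\varepsilon)-D_A(\overline{\rho}))\nabla u^\varepsilon\big)
\]
in the weak sense. Testing with $w^\varepsilon$, using ellipticity and Young's inequality, we get
\[
\sup_{t\leq T}\|w^\varepsilon(t)\|_{L^2}^2+\lambda\int_0^T\|\nabla w^\varepsilon\|^2\leq \frac1\lambda\int_0^T\!\!\int_{\bbT^2}|D_A(\rho^\varepsilon)-D_A(\overline{\rho})|^2|\nabla u^\varepsilon|^2\diff x\diff t .
\]
It remains to show that the right-hand side tends to zero. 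Since $D_A$ is continuous, it is uniformly continuous on the compact interval $[\overline{\rho}-\varepsilon_0\|\psi_0\|_\infty,\overline{\rho}+\varepsilon_0\|\psi_0\|_\infty]$ for some small $\varepsilon_0$. The uniform bound $\|\rho^\varepsilon-\overline{\rho}\|_\infty\leq\varepsilon\|\psi_0\|_\infty$ therefore gives
\[
\sup_{t,x}|D_A(\rho^\varepsilon)-D_A(\overline{\rho})|\leq \omega(\varepsilon\|\psi_0\|_\infty)\to0,
\]
where $\omega$ is the modulus of continuity. Combined with the uniform bound $\int_0^T\|\nabla u^\varepsilon\|^2\leq \|\psi_0\|_{L^2}^2/(2\lambda)$, this yields
\[
\|w^\varepsilon\|_{L^\infty(0,T;L^2)}^2\leq C\,\omega(\varepsilon\|\psi_0\|_\infty)^2\to0,
\]
which is the claim.

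The main obstacle is the $L^\infty$ control of $\rho^\varepsilon-\overline{\rho}$, which is what allows pointwise continuity of $D_A$ to be used. If $D_A$ were merely continuous and we only had $L^2$ closeness, we would need $|\nabla u^\varepsilon|^2$ to be uniformly integrable, which is not available. I would therefore make sure that the maximum principle of Lemma \ref{lemma:max_princ} applies in the comparison form: constants are solutions, so the estimate follows by testing with $(\rho^\varepsilon-c)_\pm$ for $c=\overline{\rho}\pm\varepsilon\|\psi_0\|_\infty$. A secondary point is justifying the test functions in the weak formulation. This needs $\rho^\varepsilon\in L^2(0,T;H^1)$ with $\partial_t\rho^\varepsilon\in L^2(0,T;H^{-1})$, which is the standard notion of weak solution and which I would take as part of the definition.
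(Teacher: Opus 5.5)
Your proof is correct and has the same structure as the paper's: an energy estimate for the rescaled remainder $w^\varepsilon=r^\varepsilon/\varepsilon$, which vanishes at $t=0$. It uses the maximum principle of Lemma~\ref{lemma:max_princ} to get $\|\rho^\varepsilon-\overline{\rho}\|_{L^\infty}\leq\varepsilon\|\psi_0\|_{L^\infty}$, and then continuity of $D_A$ at $\overline{\rho}$.

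The difference is how you split the flux. You write $D_A(\rho^\varepsilon)\nabla u^\varepsilon-D_A(\overline{\rho})\nabla\psi=D_A(\overline{\rho})\nabla w^\varepsilon+(D_A(\rho^\varepsilon)-D_A(\overline{\rho}))\nabla u^\varepsilon$. The coefficient error then multiplies the $\varepsilon$-dependent gradient $\nabla u^\varepsilon$, so you need the extra nonlinear energy bound $\int_0^T\|\nabla u^\varepsilon\|_{L^2}^2\diff t\leq\|\psi_0\|_{L^2}^2/(2\lambda)$.

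The paper instead writes the flux difference as $D_A(\rho^\varepsilon)\nabla r^\varepsilon+\varepsilon(D_A(\rho^\varepsilon)-D_A(\overline{\rho}))\nabla\psi$. It takes coercivity from the uniform ellipticity of $D_A(\rho^\varepsilon)$ rather than of $D_A(\overline{\rho})$. The coefficient error then multiplies the fixed function $\nabla\psi\in L^2$, and only the gradient bound for the linear solution is needed.

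The paper's splitting also softens what you call the main obstacle. There $|D_A(\rho^\varepsilon)-D_A(\overline{\rho})|^2|\nabla\psi|^2\leq C|\nabla\psi|^2$, a fixed integrable majorant. So a.e.\ convergence of $\rho^\varepsilon$ to $\overline{\rho}$ along subsequences, which already follows from your $L^2$ energy estimate, would suffice by dominated convergence. Your splitting, as you note yourself, genuinely relies on the uniform $L^\infty$ bound.

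Since $\psi_0\in L^\infty$ makes the maximum principle available, both versions prove the lemma as stated. The paper's is simply the more economical of the two.
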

Based on Lemma \ref{lemma:conv_lin_diff} we can state the strategy how we obtain approximations of $D_A$.
For this we fix $ \overline{\rho}\in(0,\rho_c^A)$ and consider the initial condition
\begin{align}\label{eqn:estimate_initial}
	\rho_0^\varepsilon(x_1,x_2) := \overline{\rho}+\varepsilon\sin(2\pi x_1),
\end{align}
where $\varepsilon>0$ is sufficiently small such that $0\leq \rho_0^\varepsilon<\rho_c^A$.
Denote by $\rho^\varepsilon$ a weak solution of \eqref{eqn:diff_eq} with initial condition \eqref{eqn:estimate_initial}.
By Lemma \ref{lemma:conv_lin_diff}, we can split $\rho^\varepsilon$ into
\begin{align*}
	\rho^\varepsilon(t,x_1,x_2) = \overline{\rho}+\varepsilon\psi(t,x_1,x_2)+r^\varepsilon(t,x_1,x_2)
\end{align*}
with $\frac1{\varepsilon}\|r^\varepsilon\|_{L^\infty(0,T;L^2(\bbT^2))}\to 0$ for $\varepsilon\to0$ and $\psi(t,x_1,x_2):=\exp(-4\pi^2 D_{1,1}(\overline{\rho}) t)\sin(2\pi x_1)$ the weak solution of \eqref{eqn:lin_orig} with $\psi_0(x_1,x_2)=\sin(2\pi x_1)$.\\
We define the normalized first sine-mode coefficient by
\begin{align*}
	b_1^\varepsilon(t):=\frac2{\varepsilon}\int_{\bbT^2}\rho^\varepsilon(t,x_1,x_2)\sin(2\pi x_1)\diff(x_1,x_2).
\end{align*}
For the first sine-mode $b_1^\varepsilon(t)$ at time $t\in[0,T]$, we have
\begin{multline}\label{eqn:fourier_coeff}
	\frac\varepsilon{2} b_1^\varepsilon(t) = \int_{\bbT^2}\rho^\varepsilon(t,x_1,x_2)\sin(2\pi x_1)\diff (x_1,x_2)
=\underbrace{\int_{\bbT^2} \overline{\rho}\sin(2\pi x_1)\diff(x_1,x_2)}_{=:I_1}\\
+\underbrace{ \int_{\bbT^2} \varepsilon\psi(t,x_1,x_2)\sin(2\pi x_1)\diff(x_1,x_2)}_{=:I_2}
	+\underbrace{ \int_{\bbT^2} r^\varepsilon(t,x_1,x_2)\sin(2\pi x_1)\diff(x_1,x_2)}_{=:I_3}.
\end{multline}
The first part $I_1$ vanishes.
From the solution representation of $\psi$, it follows that
\begin{align*}
	I_2 = \frac\varepsilon2\exp(-4\pi^2 D_{1,1}(\overline{\rho})t).
\end{align*}
Using H\"older's inequality and Lemma \ref{lemma:conv_lin_diff} yields
\begin{align*}
	|I_3|\leq \|r^\varepsilon\|_{L^\infty(0,T;L^2(\bbT^2))}\frac1{\sqrt{2}}= o(\varepsilon).
\end{align*}
Dividing both sides of \eqref{eqn:fourier_coeff} by $\frac{\varepsilon}2$ we obtain uniformly for $t\in[0,T]$ as $\varepsilon\to 0$
\begin{align*}
	b_1^\varepsilon(t) = \exp(-4\pi^2D_{1,1}(\overline{\rho})t)+o(1).
\end{align*}
Hence, for $\varepsilon>0$ small enough we use
\begin{align}\label{eqn:ode_b1}
	b_1^\varepsilon(t) \approx \exp(-4\pi^2D_{1,1}(\overline{\rho})t),
\end{align}
to approximate the diffusion coefficient.\\
When simulating the particle process on a finite mesh, we obtain noisy measurements of $b_1^\varepsilon$ due to $\varepsilon>0$ and stochastic effects.
To reduce the noise, we consider $m$ different instances of particle processes $((\pi_t^{N,j})_{t\leq T})_{j=1}^m$ with distribution associated to $\rho_0^\varepsilon\diff x$ with $\rho_0^\varepsilon$ given by \eqref{eqn:estimate_initial}.
We approximate $b_1^\varepsilon$ at the time steps $0=t_0<\dots<t_n=T$ by
\begin{align*}
	b_1^\varepsilon(t_i)\approx \tilde{b}_1(t_i) := \frac2{\varepsilon m}\sum_{j=1}^m\langle \pi_{t_i}^{N,j},\sin(2\pi\cdot)\rangle\quad i=0,\dots,n.
\end{align*}
Notice that taking the $\log$ of both sides of \eqref{eqn:ode_b1} we obtain that $t\mapsto\log(b_1^\varepsilon(t))$ is approximately an affine linear function with slope $-4\pi^2 D_{1,1}(\overline{\rho})$.
We utilize this by using a least-squares fit of $(t_i,\log(\tilde{b}_1(t_i)))_{i=0,\dots,n}$ to estimate $D_{1,1}(\overline{\rho})$.\\
The coefficient $D_{2,2}(\overline{\rho})$ is estimated analogously by using the initial perturbation
\begin{align*}
	\rho_0^\varepsilon(x_1,x_2) = \overline{\rho}+\varepsilon\sin(2\pi x_2)
\end{align*}
and measuring the decay of the first sine mode in $x_2$-direction.\\
We conduct experiments for $A=\{0,\dots,r\}^2$ for $r=1,2,3$ and $A=\{0,1,2\}\times\{0,1\}$.
To measure the diffusion coefficient for $A$ we average the measured $\tilde{b}_1(t)$ over $480$ runs on a lattice of size $N=1024$ and initial condition given by \eqref{eqn:estimate_initial} with $\varepsilon=0.005$ and $T=0.01$.
In our observation, making $\varepsilon$ or $T$ smaller and $N$ larger has no influence on the measured diffusion coefficients.
In all our experiments, the relative discrete $L^2$ error between the data $(\log(\tilde{b}_1(t_i)))_{i=0,\dots,n}$ and the corresponding linear least-squares fit is below $0.02$.
The computation time is primarily determined by the number of particles, since most of the computational cost is spent sampling the random numbers.
Sampling the random numbers needed to attempt a jump is much more costly than checking that the new configuration is admissible.
\\
\begin{figure} 
\caption{Diffusion coefficients as functions of the density} \label{fig:results}
	\begin{subfigure}[b]{0.5\textwidth}
		\includegraphics[width=1\textwidth]{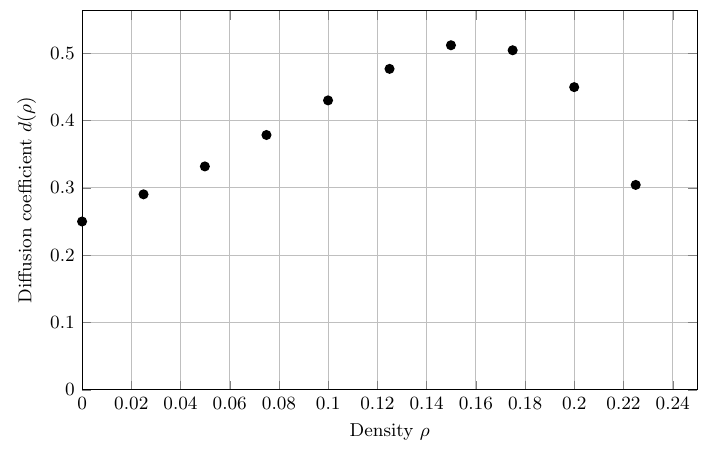}
		\caption{Measurement for $A=\{0,1\}^2$}
	\end{subfigure}%
	\begin{subfigure}[b]{0.5\textwidth}
		\includegraphics[width=1\textwidth]{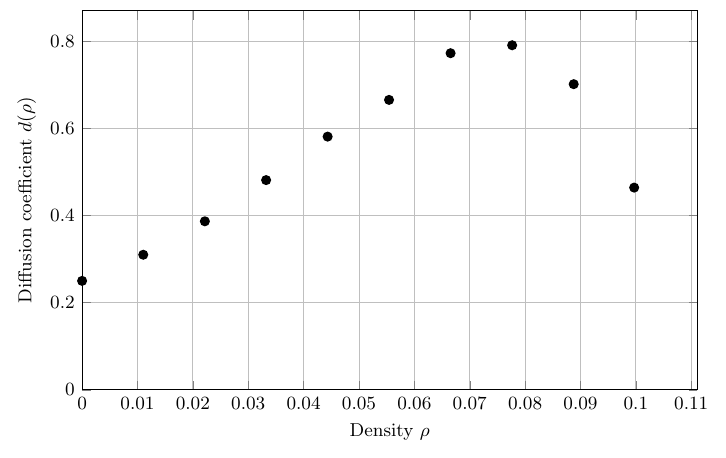}
		\caption{Measurement for $A=\{0,1,2\}^2$}
	\end{subfigure}
	\begin{subfigure}[b]{0.5\textwidth}
		\includegraphics[width=1\textwidth]{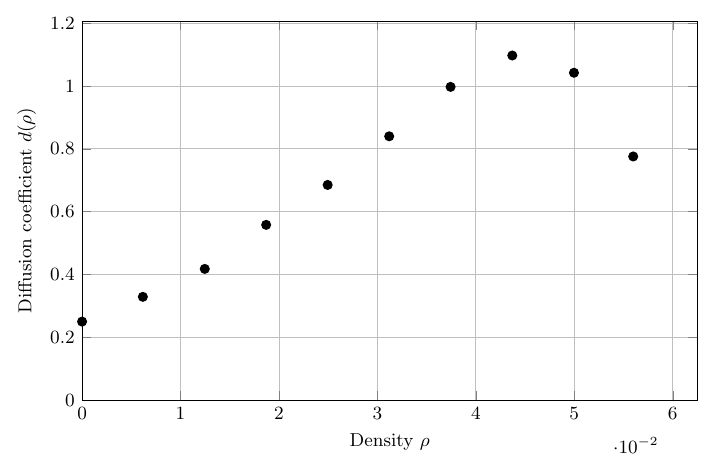}
		\caption{Measurement for $A=\{0,1,2,3\}^2$}
	\end{subfigure}%
	\begin{subfigure}[b]{0.5\textwidth}
		\includegraphics[width=1\textwidth]{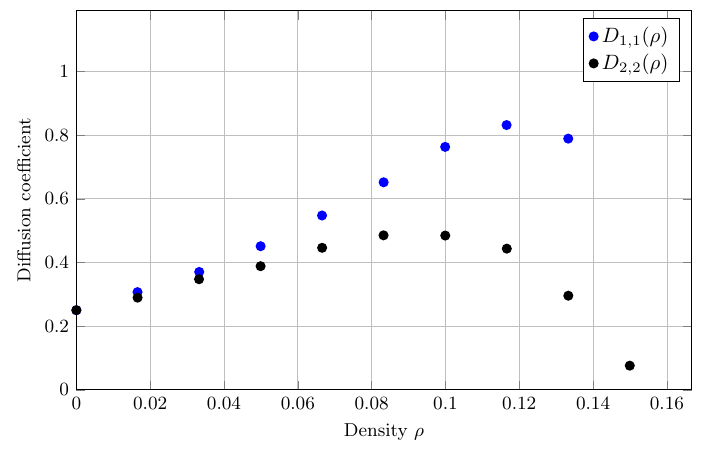}
		\caption{Measurement for $A=\{0,1,2\}\times\{0,1\}$}
	\end{subfigure}
\end{figure}
The numerical results show a clear dependence of the diffusion coefficient on both the density and the geometry of the exclusion zone.
For the square exclusion zones, Corollary \ref{cor:diffusion_form} implies that the hydrodynamic limit equation is isotropic.
In all three square cases and small densities, the process behaves similarly to the one dimensional process that has a monotonically increasing diffusion coefficient given by (see \eqref{PDE})
\begin{align*}
	d_r(\rho) = \frac1{2((r-1)\rho-1)^2}\quad\forall \rho\in\left[0,\frac1r\right],
\end{align*}
for an exclusion zone given by $\{0,\dots,r-1\}$.
However, $d_A(\rho)$ in the two dimensional case reaches a maximum at an intermediate density and decreases from there on.
There is a good heuristic reason for this: for low densities, motion in the two coordinate directions is largely independent, and 
the system resembles the one-dimensional one. Namely, that higher density increases the diffusivity. 
However, when getting closer to the critical density of the phase transition, another effect appears: more often, the particles cannot 
move because their motion is blocked by a particle which is not in the same row/column, but in the neighbouring one. This reduces the 
total mobility. 

For the rectangular exclusion zone $A=\{0,1,2\}\times\{0,1\}$, Corollary \ref{cor:diffusion_form} implies that the hydrodynamic limit equation has a diagonal diffusion matrix.
For both diagonal entries we see an initial increase with the density and both entries are close to each other.
For a density $\rho\approx 0.09$, $D_{22}(\rho)$ reaches its maximum and decreases from there on, while $D_{11}(\rho)$ keeps increasing until $
\rho\approx 0.12$ and decreases from there. Note here that the mobility is actually {\em higher} in the direction where the rectangle has 
its longer side. This is clear from the intuition we gained from the square case: on the one hand, longer shapes in the direction of movement
actually increase the mobility, but longer shapes in the opposite direction act as obstructions and decrease it. 

\begin{bem} We end this paper by discussing the discrepancy of our results with the predictions of \cite{Bio11}, and 
a discussion of the likely reasons. \cite{Bio11} treats the case $A=\{0\} \times \{0,\dots,r-1\}$, but their method can be easily modified 
to obtain both the one-dimensional case, and the case where $A$ is a square. In the former case, the prediction is a diffusivity proportional
to $\rho^{r-1}$, which is correct only in the case $r=1$. In the two-dimensional square case, the prediction is 
\begin{align}\label{eqn:bio_diff}
	D(\rho) = {\rm const} (r^2\rho(1-r^2\rho))^{r-1} I.
\end{align}
While this expression does capture the non-monotonicity of the diffusivity, its maximum lies at precisely half the maximal density, which clearly
does not match our numerical results. More seriously, this expression (as well as the one-dimensional one) predicts diffusivity going to zero
as the density vanishes, which is physically unreasonable: at low densities, the excluded volume constraint becomes irrelevant, and the diffusivity
should thus approach a constant, which is also confirmed by our data. 

The likely reason for this discrepancy is actually mentioned by the authors of \cite{Bio11} themselves. As they note towards the end of Section D, 
their derivation assumes that the stationary measure of the process is product. This is actually very far from being correct, and leads to the 
observed, rather drastic, discrepancies. In conclusion, deriving PDE as diffusive limits from particle systems is a delicate matter, and even 
reasonable heuristic arguments can easily lead to the wrong effective equation. In particular, beyond the numerics that we presented and that 
we believe is trustworthy, we have no suggestion of a formula for the diffusivity in the case $A = \{0,1\}^2$ or similar ones. 
\end{bem}

\section*{Acknowledgements}
The authors gratefully acknowledge the computing time provided to them on the high-performance computer Lichtenberg II at TU Darmstadt, funded by the German Federal Ministry of Research, Technology and Space (BMFTR) and the State of Hesse.
\newpage

\begin{appendix}
	\section{Results for nonlinear diffusion equations}\label{appendix:pde}
\begin{bem}\label{bem:new_weak_form}
	Let Assumption \ref{ass:diffusion_tensor} hold and let $\rho\in L^2(0,T;H^1(\bbT^2))\cap L^\infty(0,T;L^2(\bbT^2))$ be a weak solution of \eqref{eqn:diff_eq}.
	We will show that $\rho\in H^1(0,T;H^{-1}(\bbT^2))$ and derive a more suitable equivalent weak formulation.
	To this end, we define
	\begin{align*}
		F:[0,T]\times H^1(\bbT^2)\to\bbR,(t,G)\mapsto\int_{\bbT^2} D_A(\rho(t,x))\nabla\rho(t,x)\cdot\nabla G(x)\diff x.
	\end{align*}
	For all $G\in H^1(\bbT^2)$ and $t\in[0,T]$, we have
	\begin{align*}
		|F(t,G)|\leq \Lambda\|\nabla\rho(t,\cdot)\|_{L^2(\bbT^2)}\|G\|_{H^1(\bbT^2)}.
	\end{align*}
	Hence $F\in L^2(0,T;H^{-1}(\bbT^2))$ and for all $G\in H^1(\bbT^2)$, we have
	\begin{align*}
		\int_{\bbT^2}\rho(t,x)G(x)\diff x=\int_{\bbT^2}\rho_0(x)G(x)\diff x-\int_0^tF(s,G)\diff s
	\end{align*}
	Therefore, $\rho(t,\cdot) = \rho_0-\int_0^t F(s,\cdot)\diff s$ in $H^{-1}(\bbT^2)$ and \cite[Proposition 1.1 in III.1]{showalter2013} therefore implies that $\rho\in H^1(0,T;H^{-1}(\bbT^2))$ with $\partial_t\rho=-F$.
	In particular, the notion of weak solution of \eqref{eqn:diff_eq} is equivalent to the following identity 
	\begin{align}\label{eqn:weak_form_diff}
		\langle\partial_t\rho(t,\cdot),G(\cdot)\rangle_{H^{-1},H^1}=-\int_{\bbT^2} D_A(\rho(t,x))\nabla \rho(t,x)\cdot \nabla G(x)\diff x\quad\forall G\in H^1(\bbT^2),
	\end{align}
	which holds for a.e. $t\in[0,T]$.
	We denote $\langle\cdot,\cdot\rangle_{H^{-1},H^1}$ the duality pairing.
\end{bem}
With the weak formulation \eqref{eqn:weak_form_diff} from Remark \ref{bem:new_weak_form} we obtain the following weak maximum principle.
The maximum principle is necessary in order to prove Lemma \ref{lemma:conv_lin_diff}.
\begin{lemma}\label{lemma:max_princ}
	Let $\rho_0\in L^\infty(\bbT^2)$.
	Then the weak solution $\rho\in L^2(0,T;H^1(\bbT^2))$ of \eqref{eqn:diff_eq} satisfies
	\begin{align*}
		\|\rho-c\|_{L^\infty([0,T]\times\bbT^2)}\leq \|\rho_0-c\|_{L^\infty(\bbT^2)}
	\end{align*}
	for every $c\in\bbR$.
\end{lemma}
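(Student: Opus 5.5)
The plan is a Stampacchia truncation argument based on the weak formulation \eqref{eqn:weak_form_diff} of Remark \ref{bem:new_weak_form}. Fix $c\in\bbR$ and set $M:=\|\rho_0-c\|_{L^\infty(\bbT^2)}$; we may assume $M<\infty$. The claim is equivalent to the two one-sided bounds $\rho\le c+M$ and $\rho\ge c-M$ a.e. on $[0,T]\times\bbT^2$. I would prove the upper bound and obtain the lower one by the same argument with $(c-M-\rho)_+$ in place of $(\rho-c-M)_+$.

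For the upper bound, set $k:=c+M$ and test \eqref{eqn:weak_form_diff} with $G=(\rho(t,\cdot)-k)_+$. Since $\rho(t,\cdot)\in H^1(\bbT^2)$ for a.e.\ $t$ and positive parts preserve $H^1$, this is admissible, with $\nabla G=\bbone_{\{\rho>k\}}\nabla\rho$. The right-hand side becomes
\[
-\int_{\bbT^2}\bbone_{\{\rho>k\}}D_A(\rho)\nabla\rho\cdot\nabla\rho\,\dd x\le-\lambda\int_{\bbT^2}|\nabla(\rho-k)_+|^2\,\dd x\le 0
\]
by the ellipticity in Assumption \ref{ass:diffusion_tensor}. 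For the left-hand side I need the chain rule
\[
\langle\partial_t\rho,(\rho-k)_+\rangle_{H^{-1},H^1}=\tfrac12\tfrac{\dd}{\dd t}\|(\rho-k)_+\|_{L^2}^2
\]
for $\rho\in L^2(0,T;H^1)\cap H^1(0,T;H^{-1})$. Once this holds, $t\mapsto\|(\rho(t)-k)_+\|_{L^2}^2$ is absolutely continuous and nonincreasing. It vanishes at $t=0$ because $\rho_0\le k$ a.e., so it vanishes for all $t$, giving $\rho\le k$ a.e.

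The main obstacle is justifying this chain rule, since $(\cdot)_+$ is not $C^1$. I would first try to cite the standard Lions--Magenes-type result: for convex Lipschitz $\Phi$ with $\Phi'$ bounded, $\int\Phi(\rho)$ is absolutely continuous with derivative $\langle\partial_t\rho,\Phi'(\rho)\rangle$, and here $\Phi(s)=\frac12(s-k)_+^2$. If a citation is not available, I would regularize. Take smooth convex $\Phi_\delta$ with $\Phi_\delta'$ Lipschitz, converging to $\frac12(s-k)_+^2$. Approximate $\rho$ in $L^2(H^1)\cap H^1(H^{-1})$ by time-mollified functions, for which the identity is the classical chain rule. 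Then pass to the limit, using that $\Phi_\delta'(\rho)\in L^2(H^1)$ uniformly, with continuity of $\rho$ into $L^2$ supplied by the embedding $L^2(H^1)\cap H^1(H^{-1})\subset C([0,T];L^2)$, so the initial value is attained. A minor additional point is that $D_A$ is only defined on $[0,\rho_c^A)$. I would handle this by extending it continuously with the same ellipticity constants to all of $\bbR$, which is harmless because a posteriori $\rho$ stays within the range of $\rho_0$.
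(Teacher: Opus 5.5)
Your proposal is correct and follows the paper's proof. The paper likewise tests \eqref{eqn:weak_form_diff} with $(\rho-c-M)_+$ and $(c-M-\rho)_+$, uses ellipticity to see that $\|(\rho(t,\cdot)-c-M)_+\|_{L^2}^2$ is nonincreasing, and concludes from its vanishing at $t=0$. The differences are minor: the paper takes $M>\|\rho_0-c\|_{L^\infty}$ and lets $M$ decrease to that value at the end, and it uses the chain rule $\langle\partial_t\rho,(\rho-k)_+\rangle_{H^{-1},H^1}=\frac12\frac{\dd}{\dd t}\|(\rho-k)_+\|_{L^2}^2$ without comment, whereas you flag it and justify it.
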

\begin{proof}
	Testing \eqref{eqn:weak_form_diff} with $(\rho(t,\cdot)-c-M)_+:=\max\{0,\rho(t,\cdot)-c-M\}$ for $t\in[0,T]$ yields
	\begin{align}\label{eqn:test_cutoff}
		\frac12\partial_t\|(\rho(t,\cdot)-c-M)_+\|_{L^2(\bbT^2)}^2
		&=\left\langle\partial_t\rho(t,x),(\rho(t,x)-c-M)_+\right\rangle_{H^{-1},H^1}\\
		&= -\int_{\{\rho-c>M\}} D_A(\rho(t,x))\nabla\rho(t,x)\cdot\nabla\rho(t,x)\diff x
		\leq 0.
	\end{align}
	Choosing $M> \|\rho_0-c\|_{L^\infty(\bbT^2)}$ in \eqref{eqn:test_cutoff} yields for all $t\in[0,T]$
	\begin{align}\label{eqn:lower_bound}
		\|(\rho(t,\cdot)-c-M)_+\|_{L^2(\bbT^2)}\leq \|(\rho_0-c-M)_+\|_{L^2(\bbT^2)}=0.
	\end{align}
	Similarly, testing \eqref{eqn:weak_form_diff} with $(c-M-\rho)_+$ we obtain
	\begin{align}\label{eqn:upper_bound}
		\|(c-M-\rho(t,\cdot))_+\|_{L^2(\bbT^2)}\leq \|(c-M-\rho_0)_+\|_{L^2(\bbT^2)}=0.
	\end{align}
	Combining \eqref{eqn:lower_bound} and \eqref{eqn:upper_bound} yields for a.e. $(t,x)\in[0,T]\times\bbT^2$
	\begin{align}\label{eqn:rho_bounds}
		\begin{split}
			(\rho(t,x)-c)-M&\leq 0\\
			c-M-\rho(t,x) &\leq 0 
		\end{split}
		\implies -M\leq \rho(t,x)-c\leq M.
	\end{align}
	Taking the limit $M\to\|\rho_0-c\|_{L^\infty(\bbT^2)}$ in \eqref{eqn:rho_bounds} yields the claim.
\end{proof}
\begin{proof}[Proof of Lemma \ref{lemma:conv_lin_diff}]
	Write $\rho^\varepsilon(t,x) = \overline{\rho}+\varepsilon\psi(t,x) +r^\varepsilon(t,x)$.
	It is enough to prove $\frac1{\varepsilon}\|r^\varepsilon\|_{L^\infty(0,T;L^2(\bbT^2))}\to 0$ for $\varepsilon \to 0$.\\
	We derive an equation for $r^\varepsilon$ and prove the claim by energy estimates for $r^\varepsilon$.
	Subtracting the weak formulation of
	\begin{align*}
		\partial_t(\varepsilon\psi)=\operatorname{div}(D_A(\overline{\rho})\nabla(\varepsilon\psi))
	\end{align*}
	from the weak formulation for $\rho^\varepsilon$ we obtain for a.e. $t\in[0,T]$ and $G\in L^2(0,T;H^1(\bbT^2))$
	\begin{align}\label{eqn:weak_form_residual}
		\begin{split}
			\langle \partial_t r^\varepsilon(t,\cdot),G(t,\cdot)\rangle_{H^{-1},H^1}
			&=- \int_{\bbT^2} \left[D_A(\rho^\varepsilon(t,x))\nabla\rho^\varepsilon(t,x)-\varepsilon D_A(\overline{\rho})\nabla\psi(t,x)\right]\cdot\nabla G(t,x)\diff x\\
			&=- \int_{\bbT^2} \left[D_A(\rho^\varepsilon(t,x))\nabla r^\varepsilon(t,x)+\varepsilon (D_A(\rho^\varepsilon(t,x))-D_A(\overline{\rho}))\nabla\psi(t,x)\right]\cdot\nabla G(t,x)\diff x.
		\end{split}
	\end{align}
	From $\rho^\varepsilon,\psi\in L^2(0,T;H^1(\bbT^2))\cap H^1(0,T;H^{-1}(\bbT^2))$, it follows that $r^\varepsilon\in L^2(0,T;H^1(\bbT^2))\cap H^1(0,T;H^{-1}(\bbT^2))$ and we can take $G(x)=r^\varepsilon(t,x)$ as a test function in \eqref{eqn:weak_form_residual}.
	Using H\"older's inequality and Young's inequality on the right hand side, we obtain
	\begin{align*}
		\frac12\|r^\varepsilon(t,\cdot)\|_{L^2(\bbT^2)}^2+\frac\lambda2\|\nabla r^\varepsilon\|_{L^2([0,t]\times\bbT^2)}^2\leq \frac{\varepsilon^2}{2\lambda}\|D_A(\rho^\varepsilon)-D_A(\overline{\rho})\|_{L^\infty([0,t]\times\bbT^2)}^2\|\nabla\psi\|_{L^2([0,t]\times\bbT^2)}^2,
	\end{align*}
	where we used $r^\varepsilon(0,\cdot)=0$.
	Lemma \ref{lemma:max_princ} implies
	\begin{align*}
		\|\rho^\varepsilon-\overline{\rho}\|_{L^\infty([0,T]\times\bbT^2)}
		\leq \varepsilon\|\psi_0\|_{L^\infty(\bbT^2)}.
	\end{align*}
	Thus, $\rho^\varepsilon$ converges uniformly to $\overline{\rho}$ for $\varepsilon\to 0$.
	It follows from the continuity of $D_A$ that
	\begin{align*}
		\|D_A(\rho^\varepsilon)-D_A(\overline{\rho})\|_{L^\infty([0,T]\times\bbT^2)}\to 0\quad\text{for }\varepsilon \to 0.
	\end{align*}
	Therefore
	\begin{align*}
		\left\|\frac{\rho^\varepsilon-\overline{\rho}}{\varepsilon}-\psi\right\|_{L^{\infty}(0,T;L^2(\bbT^2))}
		= \frac1\varepsilon\|r^\varepsilon\|_{L^{\infty}(0,T;L^2(\bbT^2))}\to 0\quad\text{ for }\varepsilon \to 0.
	\end{align*}
\end{proof}
\begin{proof}[Proof of Lemma \ref{lemma:sol_to_matrix}]
	We first prove that $\rho^\varepsilon$ also satisfies the weak formulation
	\begin{multline}\label{eqn:weak_formulation_after_transform}
		\int_{\bbT^2}\rho^\varepsilon(t,x)G(x)\diff x-\int_{\bbT^2}\rho_0^\varepsilon(x)G(x)\diff x
		=-\int_0^t\int_{\bbT^2}UD_A(\rho^\varepsilon(s,x))U^T\nabla\rho^\varepsilon(s,x)\cdot\nabla G(x)\diff x\diff s,
	\end{multline}
	for all $G:\bbT^2\to\bbR$ smooth.
	After that we use Lemma \ref{lemma:conv_lin_diff} and the uniqueness for the linearized problem to prove the claim.\\
	To prove \eqref{eqn:weak_formulation_after_transform} let $G:\bbT^2\to\bbR$ be smooth.
	Then $G\circ\varphi$ is smooth and using $G\circ\varphi$ as a test function in the weak formulation for $(t,x)\mapsto\rho^\varepsilon(t,\varphi(x))$ yields
	\begin{multline}
		\int_{\bbT^2}\rho^\varepsilon(t,\varphi(x))G(\varphi(x))\diff x-\int_{\bbT^2}\rho_0^\varepsilon(\varphi(x))G(\varphi(x))\diff x\\
		=-\int_0^t\int_{\bbT^2}D_A(\rho^\varepsilon(s,\varphi(x)))U^T\nabla\rho^\varepsilon(s,\varphi(x))\cdot(U^T\nabla G(\varphi(x)))\diff x\diff s\\
		=-\int_0^t\int_{\bbT^2}UD_A(\rho^\varepsilon(s,\varphi(x)))U^T\nabla\rho^\varepsilon(s,\varphi(x))\cdot\nabla G(\varphi(x))\diff x\diff s.
	\end{multline}
	After the change of variables $v=\varphi(x)$, it follows that $\rho^\varepsilon$ also satisfies \eqref{eqn:weak_formulation_after_transform}.\\
	By Lemma \ref{lemma:conv_lin_diff},
	\begin{align*}
		\frac{\rho^{\varepsilon}-\overline{\rho}}{\varepsilon}\to \psi\quad\text{for }\varepsilon\to 0,
	\end{align*}
	where $\psi$ is the unique solution of the linear diffusion equation \eqref{eqn:lin_orig}.
	On the other hand, applying Lemma \ref{lemma:conv_lin_diff} to \eqref{eqn:weak_formulation_after_transform} shows that the same function $\psi$ is the unique solution of
	\begin{equation}\label{eqn:lin_trans}
		\begin{aligned}
		\partial_t\psi &= \operatorname{div}(UD_A(\overline{\rho})U^T\nabla\psi)\quad &&\text{on }(0,T]\times\bbT^2\\
		\psi(0,x) &= \sin(2\pi k\cdot x)\quad&&\text{for all }x\in\bbT^2.
		\end{aligned}
	\end{equation}
	The solution of \eqref{eqn:lin_orig} with initial condition $\psi_0(x) =\sin(2\pi k\cdot x)$ is
	\begin{align*}
		\psi(t,x)=\exp(-4\pi^2 tk^TD_A(\overline{\rho})k)\sin(2\pi k\cdot x),
	\end{align*}
	whereas the solution of \eqref{eqn:lin_trans} is
	\begin{align*}
		\psi(t,x)=\exp(-4\pi^2 tk^TUD_A(\overline{\rho})U^Tk)\sin(2\pi k\cdot x)
	\end{align*}
	Since the two solutions coincide and $\overline{\rho}\in(0,\rho_c^A)$ was arbitrary, we obtain
	\begin{align}\label{eqn:quadratic_form}
		k^TD_A(\overline{\rho})k=k^TUD_A(\overline{\rho})U^Tk\quad\forall k\in\bbZ^2,\,\overline{\rho}\in(0,\rho_c^A).
	\end{align}
	Taking $k=(1,0)^T, k=(0,1)^T$ and $k=(1,1)^T$ in \eqref{eqn:quadratic_form} yields
	\begin{align}\label{eqn:testet_form}
		\begin{split}
		(D_A(\overline{\rho}))_{1,1} &= (UD_A(\overline{\rho})U^T)_{1,1},\\
		(D_A(\overline{\rho}))_{2,2} &= (UD_A(\overline{\rho})U^T)_{2,2},\\
		\sum_{i,j=1}^2(D_A(\overline{\rho}))_{i,j} &= \sum_{i,j=1}^2(UD_A(\overline{\rho})U^T)_{i,j}
		\end{split}
	\end{align}
	respectively.
	It follows from the first two rows of \eqref{eqn:testet_form} that $D_A(\overline{\rho})$ and $UD_A(\overline{\rho})U^T$ have identical diagonal elements.
	Together with the last row of \eqref{eqn:testet_form} and the symmetry of $D_A(\overline{\rho})$ and $UD_A(\overline{\rho})U^T$ this implies that $D_A(\rho)=UD_A(\rho)U^T$ for $\rho\in(0,\rho_c^A)$.
	The equality for $\rho=0$ follows from the continuity of $D_A$.
\end{proof}
\end{appendix}

\bibliographystyle{plain} 
\bibliography{bib}

\end{document}